\newif\ifappendix\appendixfalse
\newif\iffull\fullfalse 
\makeatletter \@input{flags} \makeatother
\title{Kleene algebra with commutativity conditions is undecidable}
\author{Arthur Azevedo de Amorim}{Rochester Institute of Technology}{arthur.aa@gmail.com}{0000-0001-9916-6614}{National Science Foundation Grant No. 2314323.}
\author{Cheng Zhang\footnote{Work performed at Boston Unviersity}}{University College London}{czhang03@bu.edu}{https://orcid.org/0000-0002-8197-6181}{National Science Foundation Award No. 1845803 and No. 2040249.}
\author{Marco Gaboardi}{Boston University}{gaboardi@bu.edu}{0000-0002-5235-7066}{National Science Foundation Award No. 1845803 and No. 2040249.}
\authorrunning{A. A. de Amorim, C. Zhang and M. Gaboardi}
\keywords{Kleene Algebra, Hypotheses, Complexity}
\DeclareMathOperator{\Mon}{\mathsf{Mon}}
\DeclareMathOperator{\preKA}{\mathsf{preKA}}
\DeclareMathOperator{\KA}{\mathsf{KA}}
\DeclareMathOperator{\Inc}{\mathsf{Inc}}
\DeclareMathOperator{\If}{\mathsf{If}}
\DeclareMathOperator{\Halt}{\mathsf{Halt}}
\DeclareMathOperator{\Next}{\mathsf{Next}}
\DeclareMathOperator{\Comm}{\mathsf{Comm}}
\DeclareMathOperator{\Set}{\mathsf{Set}}
\DeclareMathOperator{\Rel}{\mathsf{Rel}}
\begin{document} 

\maketitle

\begin{abstract}
  We prove that the equational theory of Kleene algebra with commutativity
  conditions on primitives (or atomic terms) is undecidable, thereby settling a
  longstanding open question in the theory of Kleene algebra.  While this
  question has also been recently solved independently by Kuznetsov, our results
  hold even for weaker theories that do not support the \emph{induction axioms}
  of Kleene algebra.
\end{abstract}

\section{Introduction}

\emph{Kleene algebra} generalizes the algebra of regular languages while
retaining many of its pleasant properties, such as having a decidable equational
theory.  This enables numerous applications in program verification, by
translating programs and specifications into Kleene-algebra terms and then
checking these terms for equality.  This idea has proved fruitful in many
domains, including networked
systems~\cite{Anderson_Foster_Guha_Jeannin_Kozen_Schlesinger_Walker_2014,Foster_Kozen_Milano_Silva_Thompson_2015},
concurrency~\cite{Hoare_Möller_Struth_Wehrman_2009,Kappé_Brunet_Silva_Wagemaker_Zanasi_2020,Kappé_Brunet_Silva_Zanasi_2018},
probabilistic programming~\cite{McIver_Cohen_Morgan_2006,
  McIver_Rabehaja_Struth_2011}, relational
verification~\cite{Antonopoulos_Koskinen_Le_Nagasamudram_Naumann_Ngo_2022},
program schematology~\cite{Angus_Kozen_2001}, and program
incorrectness~\cite{Zhang_deAmorim_Gaboardi_2022}.

Many applications require extending Kleene algebra with other axioms.  A popular
extension is adding commutativity conditions $e_1e_2 = e_2e_1$, which state that
$e_1$ and $e_2$ can be composed in any order.  In terms of program analysis,
$e_1$ and $e_2$ correspond to commands of a larger program, and commutativity
ensures that their order does not affect the final output.  Such properties have
been proven useful for relational
reasoning~\cite{Antonopoulos_Koskinen_Le_Nagasamudram_Naumann_Ngo_2022} and
concurrency~\cite{diekert_PartialCommutationTraces_1997}.

Unfortunately, such extensions can pose issues for decidability.  In particular,
even the addition of equations of the form $xy = yx$, where $x$ and $y$ are
primitives, can make the equivalence of two \emph{regular languages} given by
Kleene algebra terms~\cite{Kozen_1996,gibbons_DecidabilityProblemsRational_1986,berstel_TransductionsContextFreeLanguages_1979} undecidable---in fact,
$\Pi _1^0$-complete~\cite{DBLP:conf/lics/Kozen97}, or equivalent to the
complement of the halting problem.

Despite this negative result, it was still unknown whether we could decide such
equations in \emph{arbitrary} Kleene algebras---or, equivalently, decide whether
an equation can be derived solely from the Kleene-algebra axioms.  Indeed, since
the set of Kleene-algebra equations is generated by finitely many clauses, it is
recursively enumerable, or $ \Sigma _1^0$.  Since a set cannot be simultaneously
$ \Sigma _1^0$ and $ \Pi _1^0$-complete, the problem of deciding equations under
commutativity conditions for all regular languages is not the same as the
problem of deciding such equations for all Kleene algebras.  There must be
equations that are valid for all regular languages, but not for arbitrary Kleene
algebras.  Nevertheless, the question of decidability of Kleene-algebra
equations with commutativity conditions remained open for almost 30
years~\cite{DBLP:conf/lics/Kozen97}.

This paper settles this question \emph{negatively}, proving that this problem is
undecidable.  In fact, undecidability holds even for weaker notions of Kleene
algebra that do not validate its \emph{induction axioms}, which are needed to
prove many identities involving the iteration operation. At a high level, our
proof works as follows.  Given a machine $M$ and an input $x$, we define an
inequality between Kleene algebra terms with the following two properties: (1)
if $M$ halts on $x$ and accepts, the inequality holds, but (2) if $M$ halts on
$x$ and rejects, the inequality does not hold.  If such inequalities were
decidable, we would be able to computationally distinguish these two scenarios,
which is impossible by diagonalization.

\paragraph*{On Kuznetsov's Undecidability Proof}

As we were finishing this paper, we learned that the question of undecidability
had also been settled independently by Kuznetsov in recent
work~\cite{Kuznetsov23}.  Though our techniques overlap, there are two
noteworthy differences between the two proofs.  On the one hand, Kuznetsov's
proof uses the induction axioms of Kleene algebra, so it applies to fewer
settings. On the other hand, Kuznetsov was able to prove that the equational
theory of Kleene algebra with commutativity conditions is, in fact
$ \Sigma _1^0$-complete, by leveraging \emph{effective inseparability}, a standard
notion of computability theory.  After learning about Kuznetsov's work, we could
adapt his argument to derive completeness in our more general setting as well,
so this paper can be seen as a synthesis of Kuznetsov's work and our own.

\paragraph*{Structure of the paper}
In \Cref{sec:preliminaries}, we recall basic facts about Kleene algebra, and
introduce a framework for stating the problem of equations modulo commutativity
conditions using category theory.

In \Cref{sec:machines}, we present the core of our undecidability proof. We use
algebra terms to model the transition relation of an abstract machine, and
construct a set of inequalities that allows us to tell whether a machine accepts
a given input or not.  If we could decide such inequalities, we would be able to
distinguish two effectively inseparable sets, which would lead to a contradiction.
This argument hinges on a \emph{completeness result} (\Cref{thm:completeness}),
which guarantees that, if a certain machine accepts an input, then a
corresponding inequality holds.

In \Cref{sec:relations}, we prove that an
analog of the completeness result holds for a large class of relations that can
be represented with terms, provided that they satisfy a technical condition that
allows us to reason about the image of a set by a relation.

In \Cref{sec:proving-representability}, we develop techniques to prove that the
machine transition relation satisfies the required technical conditions for
completeness. In \Cref{sec:automata}, we show how we can view Kleene algebra
terms as automata, proving an \emph{expansion lemma} (\Cref{lem:expansion}) that
guarantees that most terms can be expanded so that all of its matched strings
bounded by some maximum length can be identified.  This framework generalizes
the usual definitions of derivative on Kleene algebra terms, but does not rely
on the induction axioms of Kleene algebra.  In \Cref{sec:bounded-output}, we
show how we can refine the expansion lemma when terms have
\emph{bounded-output}, which, roughly speaking, means such terms represent
relations that map a string to only finitely many next strings.  We prove that
the transition relation satisfies these technical conditions
(\Cref{sec:finalizing}), which concludes the undecidability proof.

We conclude in \Cref{sec:conclusion}, providing a detailed comparison between
our work and the independent work of Kuznetsov~\cite{Kuznetsov23}.

\section{Kleene Algebra and Commutable Sets}
\label{sec:preliminaries}

To set the stage for our result, we recall some basic facts about Kleene algebra
and establish some common notation that we will use throughout the paper.  We
also introduce a notion of \emph{commutable set}, which we will use to define
algebras with commutativity conditions.

A (left-biased) \emph{pre-Kleene algebra} is an idempotent semiring $X$
equipped with a star operation. Spelled out explicitly, this means that $X$ has
operations
\begin{align*}
  0 & : X &
  1 & : X \\
  (-)+(-) & : X \times X \to X &
  (-)\cdot(-) & : X \times X \to X &
  (-)^* & : X \to X,
\end{align*}
which are required to satisfy the following equations:
\begin{align*}
  1 \cdot x & = x \cdot 1 = x & 
  0 \cdot x & = x \cdot 0 = 0 &
  x \cdot (y \cdot z) & = (x \cdot y) \cdot z \\
  0 + x & = x & 
  x + y & = y + x &
  x + (y + z) & = (x + y) + z \\
  x \cdot (y + z) & = x \cdot y + x \cdot z & 
  (x + y) \cdot z & = x \cdot z + y \cdot z &
  x^* & = 1 + x \cdot x^*,
\end{align*}
where the last rule \(x^* = 1 + x \cdot x^*\) is named ``left unfolding''.  A
pre-Kleene algebra carries the usual ordering relation on idempotent monoids:
$x \leq y$ means that $y + x = x$.  A \emph{Kleene algebra} is a pre-Kleene
algebra that satisfies the following properties:
\begin{align*}
  xy \le y & \Rightarrow x^*y \le y & xy \le x & \Rightarrow xy^* \le x,
\end{align*}
dubbed left and right induction.  A \emph{$*$-continuous Kleene algebra} is a
pre-Kleene algebra where, for all $p$, $q$ and $r$, $\sup_{n \geq 0}pq^nr$
exists and is equal to $pq^*r$.  Every $*$-continuous algebra satisfies the
induction axioms, so it is, in fact, a Kleene algebra.

\begin{example}\label{exp:preKA-that-is-not-a-WKA}
  Though many pre-Kleene algebras that we'll consider are actually proper Kleene
  algebras, the two theories do not coincide.  The following algebra, adapted
  from Kozen~\cite{kozen_KleeneAlgebrasClosed_1990}, validates all the
  pre-Kleene algebra axioms, but not the induction ones. The carrier set of the
  algebra is \(\mathbb{N} + \{\bot, \top\}\), ordering by posing
  \( \bot \leq n \leq \top \) for all \(n \in \mathbb{N}\).  The addition
  operation computes the maximum of two elements. Multiplication is defined as:
  \begin{align*}
    x \cdot  \bot  & \triangleq  \bot  \cdot x \triangleq  \bot  &
    x \cdot  \top  & \triangleq  \top  \cdot x \triangleq  \top  \text{ when } x \neq \bot &
    x \cdot y & \triangleq x +_{\mathbb{N}} y
  \end{align*}
  where \(+_{\mathbb{N}}\) is the usual addition operation on natural numbers.
  The neutral elements of addition and multiplication are respectively $\bot$ and \(0\).  
  The star operation is defined as follows:
  \[x^* \triangleq
    \begin{cases}
      0 & \text{if } x = \bot \\
      \top & \text{otherwise}
    \end{cases}
  \]
  We can verify the unfolding rule by case analysis:
  \begin{align*}
    \text{when \(x = \bot\):}
    \quad&
           \bot^*
           = 0
           = \max(0, \bot)
           = \max(0, \bot \cdot 0)
           = \max(0, \bot \cdot \bot^*);\\
    \text{when \(x \neq \bot\):}
    \quad&
           x^*
           = \top
           = \max(0, \top)
           = \max(0, x \cdot \top)
           = \max(0, x \cdot x^*).
  \end{align*}
  However, this algebra is not a proper Kleene algebra, because it doesn't
  satisfy \((x^*)^* = x^*\) (which must hold in any Kleene algebra). Indeed,
  \((\bot^*)^* = 0^* = \top \neq 0 = \bot^*.\)
\end{example}

\begin{remark}
  Weak Kleene algebras~\cite{kozen_LefthandedCompleteness_2020} are algebraic
  structures that sit between proper Kleene algebras and pre-Kleene algebras.
  They need not validate the induction axioms of Kleene algebra, but satisfy
  more rules than just left unfolding---in particular, $(x^*)^* = x^*$.  Thus,
  \Cref{exp:preKA-that-is-not-a-WKA} also shows that the theory of pre-Kleene
  algebras is strictly weaker than that of weak Kleene algebras.
\end{remark}

Let $X$ and $Y$ be pre-Kleene algebras.  A \emph{morphism} of type $X \to Y$ is
a function $f : X \to Y$ that commutes with all the operations.  This gives rise
to a series of categories $\KA^* \subset \KA \subset \preKA$ of $*$-continuous
algebras, Kleene algebras, and pre-Kleene algebras. Each category is a strict
full subcategory of the next one---strict because some pre-Kleene algebras are
not Kleene algebras (\Cref{exp:preKA-that-is-not-a-WKA}) and because some Kleene
algebras are not $*$-continuous~\cite{kozen_KleeneAlgebrasClosed_1990}.

The prototypical example of Kleene algebra is given by the set $\mathcal{L}X$ of
regular languages over some alphabet $X$.  In program analysis applications, a
regular language describes the possible traces of events performed by some
system. We use the multiplication operation to represent the sequential
composition of two systems: if two components produce traces $t_1$ and $t_2$,
then their sequential composition produces the concatenated trace $t_1t_2$,
indicating that the actions of the first component happen first.  Thus, by
checking if two regular languages are equal, we can assert that the behaviors of
two programs coincide.  When $X$ is empty, $\mathcal{L}X$ is isomorphic to the
booleans $\mathbb{2}  \triangleq  \{0 \leq 1\}$.  The addition operation is disjunction, the
multiplication operation is conjunction, and the star operation always outputs
1. This Kleene algebra is the initial object in all three categories $\preKA$,
$\KA$ and $\KA^*$.

The induction property of Kleene algebra allows us to derive several useful
properties for terms involving the star operation.  For example, they imply that
the star operation is monotonic, a \emph{right-unfolding rule} $x^* = 1 + x^*x$,
and also that $x^*x^* = x^*$.  This means that many of intuitions about regular
languages carry over to Kleene algebra.  Unfortunately, when working with
pre-Kleene algebras, most of these results cannot be directly applied, making
reasoning trickier.  In practice, we can only reason about properties of the
star operation that involve a finite number of uses of the left-unfolding rule.
Dealing with this limitation is at the heart of the challenges we will face when
proving our undecidability result.

\subsection{Commuting conditions}

Sometimes, we would like to reason about a system where two actions can be
reordered without affecting its behavior.  For example, we might want to say
that a program can perform assignments to separate variables in any order, or
that actions of separate threads can be executed concurrently.  To model this,
we can work with algebra terms where some elements can be composed in any order.
As we will see, unfortunately, adding such hypotheses indiscriminately can lead
to algebras with an undecidable equational theory.  The notion of
\emph{commutable set}, which we introduce next, allows us to discuss such
hypotheses in generality.

\begin{definition}
  \label{def:comm}
  A \emph{commuting relation} on a set $X$ is a reflexive symmetric relation.  A
  \emph{commutable set} is a carrier set endowed with a commuting relation
  $ \sim $.  We say that two elements $x$ and $y$ commute if $x \sim y$.  A
  commutable set is \emph{commutative} if all elements commute; it is
  \emph{discrete} if the commuting relation is equality.  A morphism of
  commutable sets is a function that preserves the commuting relation, which
  leads to a category $\Comm$.  A \emph{commutable subset} of a commutable set
  $X$ is a commutable set $Y$ whose carrier is a subset of $X$, and whose
  commuting relation is the restriction of $\sim$ to $Y$. %
  \iffull{}%
    A commutable subset of $X$ is the same thing as a regular subobject of $X$
    in $\Comm$---i.e., it is the equalizer of a pair of maps; more precisely,
    the obvious maps into the two-element set.%
  \fi{}%
  We'll often abuse notation and treat a subobject $Y \hookrightarrow X$ as a
  commutable subset if its image in $X$ is a commutable subset.
\end{definition}

Given a commutable set, we have various ways of building algebraic structures,
which can be summarized in the diagram of \Cref{fig:comm-set-diagram} (which is
commutative, except for the dashed arrows).
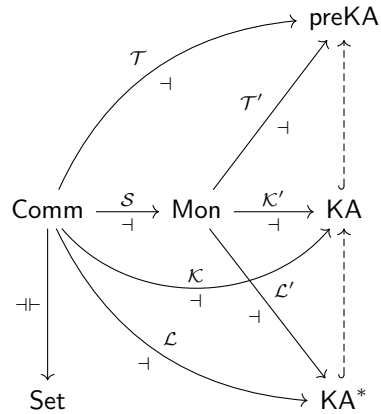
\begin{figure}
\begin{center}
  \begin{tikzcd}[row sep=2cm]
    &
    & \preKA \\
    \Comm \ar[r, "\dashv"', "\mathcal{S}"]
    \ar[urr, "\dashv"',"\mathcal{T}", bend left]
    \ar[drr, "\dashv"',"\mathcal{L}", bend right]
    \ar[rr, "\dashv"', "\mathcal{K}", bend right=50]
    \ar[d, "\dashv \vdash"']
    & \Mon
    \ar[ur, "\dashv"', "\mathcal{T}'"]
    \ar[r, "\dashv"', "\mathcal{K}'"]
    \ar[dr, "\dashv"', "\mathcal{L}'"]
    & \KA \ar[u, hook, dashed] \\
    \Set
    &
    & \KA^* \ar[u, hook, dashed]
  \end{tikzcd}
\end{center}
\caption{Algebraic constructions on commutable sets}
\label{fig:comm-set-diagram}
\end{figure}
The right-pointing arrows, marked with a $ \dashv $, denote free constructions,
in the sense that they have right adjoints that forget structure.  The first
construction, $\mathcal{S}$, is a functor from $\Comm$ to the category $\Mon$ of
monoids and monoid morphisms.  It maps a commutable set $X$ to the monoid
$\mathcal{S}X$ of strings over $X$, where we equate two strings if they can be
obtained from each other by swapping adjacent elements that commute in $X$.  The
monoid operation is string concatenation, and the neutral element is the empty
string.  The corresponding right adjoint views a monoid $Y$ as a commutable set
where $x \sim y$ if and only if $xy = yx$.

Another group of constructions extends a monoid $X$ with the other Kleene
algebra operations, and quotient the resulting terms by the equations we desire.
For example, the elements of $\mathcal{T}'X$ are terms formed with Kleene
algebra operations, where we identify the monoid operation with the
multiplication operation of the pre-Kleene algebra, and where we identify two
terms if they can be obtained from each other by applying the pre-Kleene algebra
equations.  The construction $\mathcal{K}'$ is obtained by imposing further
equations on terms, while $\mathcal{L}'$ is given by the algebra of regular
languages over a monoid~\cite{DBLP:conf/lics/Kozen97}, which we'll define soon.
The right adjoints of these constructions view an algebra as a multiplicative
monoid.  By composing these constructions with $\mathcal{S}$, we obtain free
constructions $\mathcal{T}$, $\mathcal{K}$ and $\mathcal{L}$ that turn any
commutable set into some Kleene-algebra-like structure.

%

Being a free construction means, in particular, that we can embed the elements
of a commutable set $X$ into $\mathcal{S}X$, $\mathcal{T}X$, $\mathcal{K}X$ and
$\mathcal{L}X$, as depicted in this commutative diagram:
\begin{center}
  \begin{tikzcd}
    &
    & \mathcal{T}X \ar[d] \ar[dd, bend left=60, "l"]\\
    X \ar[r, hook]
    &
    \mathcal{S}X
    \ar[ur, hook]
    \ar[r, hook]
    \ar[dr, hook]
    & \mathcal{K}X \ar{d} \\
    &
    & \mathcal{L}X.
  \end{tikzcd}
\end{center}
By abuse of notation, we'll usually treat $X$ as a proper subset of the free  
algebras. The vertical arrows take the elements of some algebra and impose the
additional identities required by a stronger algebra.  The composite $l$
computes the \emph{language interpretation} of a term, and will play an
important role in our development, as we will see.

Free constructions, like \(\mathcal{T}\), also allow us to define a morphism out
of an algebra $\mathcal{T}X$ simply by specifying how the morphism acts on $X$.
In other words, if $f : X  \to  Y$ is a morphism mapping a commutable set $X$ to a
pre-Kleene algebra $Y$, there exists a unique algebra morphism
$\hat{f} : \mathcal{T}X  \to  Y$ such that the following diagram commutes:
\begin{center}
  \begin{tikzcd}
    \mathcal{T}X \ar[r, "\hat{f}", dashed] & Y \\
    X \ar[u, hook] \ar[ur, "f", swap] &
  \end{tikzcd}
\end{center}
Since $f$ and $\hat{f}$ correspond uniquely to each other, we will not bother
distinguishing between the two.  We'll employ similar conventions for other left
adjoints such as $\mathcal{S}$ or $\mathcal{L}$.

The last construction on \Cref{fig:comm-set-diagram} allow us to turn any
commutable set into a plain set by forgetting its commuting relation.  This
construction has both a left and a right adjoint: the right adjoint views a set
as a commutative commutable set, by endowing it with the total relation; the
left adjoint views a set as a discrete commutative set, by endowing it with the
equality relation.  By turning a set into a commutable set, discrete or
commutative, and then building an algebra on top of that commutable set, we are
able to express the usual notions of free algebra over a set, or of a free
algebra where all symbols are allowed to commute.

\begin{remark}[Embedding algebras]
\label{rem:embeddings}
We introduce some notation for embedding algebras into larger ones.  Suppose
that $X$ is a commutable set and $Y  \subseteq  X$ is a commutable subset.  By
functoriality, this inclusion gives rise to morphisms of algebras of types
$\mathcal{S}Y  \to  \mathcal{S}X$, $\mathcal{T}Y  \to  \mathcal{T}X$, etc. These
morphisms are all injective, because they can be inverted: we can define a
projection $ \pi _Y$ that maps $x  \in  X$ to itself, if $x  \in  Y$, or to 1, if $x  \notin  Y$.
This definition is valid because, since $Y$ inherits the commuting relation from
$X$, and since 1 commutes with everything in $\mathcal{S}Y$, $\mathcal{T}Y$,
etc., we can check that the commuting relation in $X$ is preserved.
\end{remark}

\subsection{Regular Languages}

If $X$ is a monoid, we can view its power set $\mathcal{P}X$ as a $*$-continuous
algebra equipped with the following operations:
\begin{align*}
  0 &  \triangleq   \emptyset  &
  1 &  \triangleq  \{1\} \\
  A + B &  \triangleq  A  \cup  B &
  A  \cdot  B &  \triangleq  \{ xy  \mid  x  \in  A, y  \in  B \} &
  A^* &  \triangleq   \bigcup _{n  \in  \mathbb{N}} A^n.
\end{align*}
The $*$-continuous algebra $\mathcal{L}'X$ of regular languages over $X$ is the
smallest subalgebra of $\mathcal{P}X$ that contains the singletons.  The
language interpretation $l : \mathcal{T}X \to \mathcal{L}X$ is the morphism that
maps a symbol $x \in X$ to the singleton set $\{x\}$.  This allows us to view a
term as a set of strings over $X$, and we will often do this to simplify the
notation; for example, if $e$ is a term, we'll write $X \subseteq e$ to mean
$X \subseteq l(e)$.  Indeed, as the next few results show, it is often safe for
us to view a term as a set of strings.

\begin{theoremEnd}{theorem}
  \label{thm:membership-characterization}
  If $s \in \mathcal{S}X$ is a string and $e \in \mathcal{T}X$ a term, then
  $s \leq e$ is equivalent to $s \in l(e)$.
\end{theoremEnd}

\begin{proofEnd}
  Suppose that $s  \leq  e$.  Then $s  \in  \{s\} = l_X(s)  \subseteq  l_X(e)$ by monotonicity.

  Conversely, suppose that $s  \in  l_X(e)$. We proceed by induction on $e$.
  \begin{itemize}
  \item If $e = x  \in  X$, then $s  \in  l_X(x)$ means that $s = x$.  Thus, we get
    $s  \leq  e$.
  \item If $e = 0$, we get a contradiction.
  \item If $e = 1$, we must have $s = 1$, thus $s  \leq  e$.
  \item If $e = e_{1}e_{2}$, we must have $s = s_{1}s_{2}$, with $s_i  \in  l_X(e_i)$.  By the
    induction hypotheses, $s_i  \leq  e_i$, and thus $s  \leq  e$.
  \item If $e = e_{1} + e_{2}$, then there is some $i$ such that $s  \in  l_X(e_i)$.  By
    the induction hypothesis, $s  \leq  e_i$, and thus $s  \leq  e_{1} + e_{2}$.
  \item Finally, suppose that $e = e_{1}^*$.  Thus, there exists some n such that
    $s  \in  l_x(e_{1})^n$.  This means that we can find a family
    $(s_i)_{i  \in  \{1, \ldots ,n\}}$ such that $s =  \prod _i s_i$ and $s_i  \in  l_x(e_{1})$ for
    every $i$.  By the induction hypothesis, $s_i  \leq  e_{1}$ for every $i$.
    Therefore, $s =  \prod _i s_i  \leq  e_{1}^n  \leq  e_{1}^* = e$.
  \end{itemize}
\end{proofEnd}

\begin{theoremEnd}{theorem}
  \label{thm:finite-term}
  We say that $e  \in  \mathcal{T}X$ is \emph{finite} if its language $l(e)$ is. In
  this case, then $e =  \sum  l(e)$.
\end{theoremEnd}

\begin{proofEnd}
  By induction on $e$. We note that, if $l(e)$ is finite, then $l(e')$ is also
  finite for every immediate subterm $e'$, which allows us to apply the relevant
  induction hypotheses.  If $e$ is of the form $e_1e_2$ and $l(e) =  \emptyset $, this
  need not be the case, but at least one of the factors $e_i$ satisfies
  $l(e_i) =  \emptyset $, which is good enough.
\end{proofEnd}

\begin{theoremEnd}{corollary}
  \label{thm:finiteness-injective}
  The language interpretation $l$ is injective on finite terms: if $l(e_{1}) =
  l(e_{2})$ and both \(e_1\) and \(e_2\) are finite, then $e_{1} = e_{2}$.
\end{theoremEnd}

\begin{proofEnd}
  We have $e_{1} =  \sum l(e_{1}) =  \sum l(e_{2}) = e_{2}$.
\end{proofEnd}

These results allow us to unambiguously view a finite set of strings over $X$ as
a finite term over $X$.  We'll extend this convention to other sets: $Y$ is a
(pre-)Kleene algebra, we are going to view a finite set of elements $A  \subseteq  Y$ as
the element $ \sum _{a  \in  A} a  \in  Y$.

\begin{theoremEnd}{corollary}
  \label{thm:emptiness-characterization}
  For every term $e  \neq  0$, there exists some string $s$ such that $s  \leq  e$.
\end{theoremEnd}

\begin{proofEnd}
  Note that $l(e)  \neq   \emptyset $.  Indeed, if $l(e) =  \emptyset  = l(0)$, then $e = 0$ by
  \Cref{thm:finiteness-injective}, which contradicts our hypothesis.  Therefore,
  we can find some $s$ such that $s  \in  l(e)$.  But this is equivalent to $s  \leq  e$
  by \Cref{thm:membership-characterization}.
\end{proofEnd}

One useful property of Kleene algebra is that, if $X$ is finite, then
$X^*  \in  \mathcal{K}X$ is the top element of the algebra.  This result is
generally not valid for $\mathcal{T}X$, but the following property will be good
enough for our purposes.

\begin{theoremEnd}[normal]{theorem}
  If $X$ is finite and $e  \in  \mathcal{T}X$ is finite, then $eX^*  \leq  X^*$.
\end{theoremEnd}

To conclude our analogy between languages and terms, as far as $*$-continuous
algebras are concerned, elements of $\mathcal{T}X$ are just as good as their
%
corresponding languages---if $Y$ is $*$-continuous, then every morphism of
algebras $f : \mathcal{T}X  \to  Y$ can be factored through the language
interpretation $l$:
\begin{center}
  \begin{tikzcd}
    X \ar[d, hook] \ar[r, hook] & \mathcal{L}X \ar[d] \\
    \mathcal{T}X \ar[r, swap, "f"] \ar[ur, "l"] & Y.
  \end{tikzcd}
\end{center}
This has some pleasant consequences.  For example, let
$[-]_0 : \mathcal{T}X  \to  \mathbb{2}$ be the morphism that maps every $x  \in  X$ to
0. Then $[e]_0 = 1$ if and only if $1  \leq  e$.  Indeed, this morphism must factor
through $\mathcal{L}X$.  The corresponding factoring $\mathcal{L}X$ must map any
nonempty string to 0 and the empty string to 1.  Thus, $[e]_0 = 1$ if and only
if $1  \in  l(e)$, which is equivalent to $1  \leq  e$.

\section{Undecidability via Effective Inseparability}
\label{sec:machines}

Our undecidability result works by using pre-Kleene algebra equations to encode
the execution of \emph{two-counter machines}.  Roughly speaking, a two-counter
machine $M$ is an automaton that has a control state and two counters.  The
machine can increment each counter, test if their values are zero, and halt.
Two-counter and Turing machines are equivalent in expressive power: any
two-counter machine can simulate the execution of a Turing machine, and vice
versa; see Hopcroft et al.~\cite[§8.5.3,§8.5.4]{Hopcroft2001} for an idea of how
this simulation works.  In particular, given a Turing machine $M$, there exists
a two-counter machine that halts on every input where $M$ halts, and yields the
same output for that input.  For this reason, we'll tacitly use two-counter
machines to implement computable functions in what follows.

\begin{definition}
  \label{def:two-counter-machine}
  A \emph{two-counter} machine is a tuple $M = (Q_M, \dot{q},  \iota )$, where $Q_M$
  is a finite set of \emph{control states}, $\dot{q}  \in  Q_M$ is an initial state,
  and $ \iota  : Q_M  \to  I_M$ is a transition function.  The set $I_M$ is the set of
  \emph{instructions} of the machine, defined by the grammar
  \begin{align*}
    I_M \ni i
    & := \Inc(r, q) \mid \If(r,q,q) \mid \Halt(x)
    & (r \in \{1,2\}, q \in Q_M, x \in \{0,1\}).
  \end{align*}
\end{definition}

Two-counter machines act on configurations, which are strings of the form
$a^nb^mq$, where $q$ is a control state and $a$ and $b$ are counter symbols: the
number of symbol occurrences determines which number is stored in a counter.
When the machine halts, it outputs either $1$ or $0$ to indicate whether its
input was accepted or rejected.

\begin{definition}
  \label{def:configurations}
  Let $M$ be a two-counter machine.  We define the following discrete commutable
  sets and terms:
  \begin{align*}
     \Sigma _M &  \triangleq  Q_M + \{a,b,c_0,c_1\} & \text{symbols} \\
    \mathcal{T} \Sigma _M  \ni  C_M &  \triangleq  a^*b^*Q_M & \text{running configurations} \\
    \mathcal{T} \Sigma _M  \ni  T_M &  \triangleq  C_M + \{c_0,c_1\} & \text{all configurations}.
  \end{align*}
\end{definition}

Normally, we would define the semantics of a two-counter machine directly, as a
relation on configurations.  However, it'll be more convenient to instead define
the semantics through algebra terms that describe the graph of this relation,
since we'll use these terms to analyze the execution of a machine with equations
in pre-Kleene algebra. Our definition relies on the following construction.

\begin{definition}
  Let $X$ and $Y$ be commutable sets.  We define a commutable set
  \[ X  \oplus  Y  \triangleq  \{ x_l  \mid  x  \in  X \}  \uplus  \{y_r  \mid  y  \in  Y \}, \] where the commuting
  relation on $X  \oplus  Y$ is generated by the following rules:
  \begin{mathpar}
    \inferrule
    { }
    { x_l  \sim  y_r }
    \and
    \inferrule
    { x  \sim  x' }
    { x_l  \sim  x'_l }
    \and
    \inferrule
    { y  \sim  y' }
    { y_r  \sim  y'_r }
  \end{mathpar}
  The canonical injections $(-)_l : X \to X \oplus Y$ and
  $(-)_r : Y \to X \oplus Y$ are morphisms in $\Comm$ (and present commutable
  subsets).  We abbreviate $X \oplus X$ as $\ddot{X}$.
\end{definition}

If $X$ and $Y$ are commutable sets, we abuse notation and view the functions
$(-)_l : X  \to  X  \oplus  Y$ and $(-)_r : Y  \to  X  \oplus  Y$ as having types
$\mathcal{T}X  \to  \mathcal{T}(X  \oplus  Y)$ and $\mathcal{T}Y  \to  \mathcal{T}(X  \oplus  Y)$.  We
have the corresponding projection functions
$ \pi _l : \mathcal{T}(X  \oplus  Y)  \to  \mathcal{T}X$ and
$ \pi _r : \mathcal{T}(X  \oplus  Y)  \to  \mathcal{T}Y$, where $ \pi _l(y_r) = 1$ for $y  \in  Y$, and
similarly for $ \pi _r$ (cf. \Cref{rem:embeddings}).  If $X$ is a commutable set,
view a term $e  \in  \mathcal{T}X$ as an element $\mathcal{T}\ddot{X}$ by mapping
each symbol $x  \in  X$ in $e$ to $x_lx_r$.  We'll use a similar convention for
strings $\mathcal{S}$.

The idea behind this construction is that any string over $X \oplus Y$ can be
seen as a pair of strings over $X$ and $Y$.  More precisely, the monoids
$\mathcal{S}(X \oplus Y)$ and $\mathcal{S}X \times \mathcal{S}Y$ are isomorphic
via the mappings
\begin{align*}
  \mathcal{S}(X  \oplus  Y)  \ni  s &  \mapsto  ( \pi _l(s),  \pi _r(s))  &
  \mathcal{S}X  \times  \mathcal{S}Y  \ni  (s_1,s_2) &  \mapsto  (s_1)_l(s_2)_r.
\end{align*}
Since a term $e$ over $X \oplus Y$ can be seen as a set of strings over
$X \oplus Y$, we can also view it as a set of pairs of strings over $X$ and
$Y$---in other words, as a relation from $\mathcal{S}X$ to $\mathcal{S}Y$.  We
write $s \to _e s'$ if two strings are related in this way; that is, if
$s_ls'_r \leq e$.

\begin{definition}[Running a two-counter machine]
  \label{def:two-counter-transition}
  We interpret each instruction $i  \in  I_M$ as an element $ \llbracket i \rrbracket   \in  \mathcal{T}\ddot{ \Sigma }_M$:
  \begin{align*}
     \llbracket \Inc(1,q) \rrbracket  &  \triangleq  a_ra^*b^*q_r &
     \llbracket \If(1,q_{1},q_{2}) \rrbracket  &  \triangleq  b^*(q_{1})_r + a_la^*b^*(q_{2})_r \\
     \llbracket \Inc(2,q) \rrbracket  &  \triangleq  a^*b_rb^*q_r &
     \llbracket \If(2,q_{1},q_{2}) \rrbracket  &  \triangleq  a^*(q_{1})_r + a^*b_lb^*(q_{2})_r \\
     \llbracket \Halt(x) \rrbracket  &  \triangleq  (c_x)_r.
  \end{align*}
  The \emph{transition relation} of $M$, $R_M  \in  \mathcal{T}\ddot{ \Sigma }_M$, is defined as
  \begin{align*}
     R_M &  \triangleq   \sum \{ \llbracket  \iota (q) \rrbracket q_l \mid q  \in  Q_M \}.
  \end{align*}
  We say that $M$ \emph{halts} on $n$ if $a^nb^0\dot{q}  \to _{R_M}^* c_x$ for some
  $x  \in  \{0,1\}$.  We refer to $x$ as the \emph{output} of $M$ on $n$.
\end{definition}

\begin{theoremEnd}{lemma}
  The relation $R_M$ satisfies the following property: for every
  $s \to _{R_M} s'$, $s$ is of the form $a^nb^mq \leq C_M$.  Moreover, for any
  $s$ of this form, we have $s' = \llbracket \iota (q) \rrbracket_f (n,m)$, where the
  function $ \llbracket i \rrbracket_f : \mathbb{N} \times \mathbb{N} \to T_M$ is
  defined as follows:
  \begin{align*}
     \llbracket \Inc(1,q) \rrbracket_f (n, m) &  \triangleq  a^{n+1}b^mq &
     \llbracket \If(1,q_{1},q_{2}) \rrbracket_f (n, m) &  \triangleq  \begin{cases}
      a^nb^mq_{1} & \text{if $n = 0$} \\
      a^pb^mq_{2} & \text{if $n = p+1$}
    \end{cases} \\
     \llbracket \Inc(2,q) \rrbracket_f (n, m) &  \triangleq  a^nb^{m+1}q &
     \llbracket \If(2,q_{1},q_{2}) \rrbracket_f (n, m) &  \triangleq  \begin{cases}
      a^nb^mq_{1} & \text{if $m = 0$} \\
      a^nb^pq_{2} & \text{if $m = p+1$}
    \end{cases} \\
     \llbracket \Halt(x) \rrbracket_f (n,m) &  \triangleq  c_x.
  \end{align*}
  In particular, $R_M$ defines a (partial) functional relation on $T_M$.
\end{theoremEnd}

This means that \Cref{def:two-counter-transition} accurately describes the
standard semantics of two-counter machines~\cite{Hopcroft2001}, which allows us
to analyze their properties algebraically.  Combining this encoding with
\Cref{thm:basic-inseparability}, we can show that KA inequalities over
\(\ddot{ \Sigma }_M\) cannot be decided.  More precisely, in the remainder of
the paper, our aim is to prove the following results:

\begin{theoremEnd}{theorem}[Soundness]
  \label{thm:soundness}
  Given a two-counter machine $M$ and a configuration $s  \leq  T_M$, suppose that
  the following inequality holds in $\mathcal{L}\ddot{ \Sigma }_M$:
  \[ s_rR_M^*  \leq   \Sigma ^*(C_M + c_1)_r +  \Sigma _M^* \Sigma _M^{ \neq }\ddot{ \Sigma }_M^*, \] where
  \(
     \Sigma ^{ \neq }_M  \triangleq   \sum _{\substack{x, y  \in   \Sigma  \\ x  \neq  y}} x_ly_r.
  \)
  If $s  \to _{R_M}^* c_x$, then $x = 1$.
\end{theoremEnd}

\begin{proofEnd}
  Suppose that we have some finite sequence of transitions
  $s = s_0  \to   \cdots   \to  s_n = c_x$.  By definition, $(s_i)_l(s_{i+1})_r  \leq  R_M$ for
  every $i  \in  \{0, \ldots ,n-1\}$.  Thus, we have the following inequality on languages:
  \begin{align*}
    p
    &  \triangleq  (s_0)_r  \cdot  (s_0)_l(s_1)_r  \cdots  (s_{n-1})_l(s_n)_r \\
    &  \leq  (s_0)_r  \cdot  R_M  \cdot   \cdots   \cdot  R_M \\
    &  \leq  (s_0)_rR_M^* \\
    &  \leq   \Sigma _M^*(C_M + c_1)_r +  \Sigma _M^* \Sigma _M^{ \neq }\ddot{ \Sigma }_M^*.
  \end{align*}
  On the other hand, by shuffling left and right characters,
  \begin{align*}
    p
    & = (s_0)_r  \cdot  (s_0)_l(s_1)_r  \cdots  (s_{n-1})_l(s_n)_r \\
    & = (s_0)_r(s_0)_l  \cdot  (s_1)_r(s_1)_l  \cdots  (s_{n-1})_r(s_{n-1})_l  \cdot  (s_n)_r \\
    & = s_0  \cdots  s_{n-1} (s_{n})_r \\
    &  \leq   \Sigma _M^*( \Sigma _M)_r^+.
  \end{align*}
  We can check that the languages $ \Sigma _M^*( \Sigma _M)_r^+$ and
  $ \Sigma _M^* \Sigma _M^{ \neq }\ddot{ \Sigma }_M^*$ are disjoint.  Therefore, it must be the case that
  $p  \leq   \Sigma _M^*(C_M + c_1)_r$.  By projecting out the right components, we find that
  $ \pi _r(p) = s_0  \cdots  s_n  \leq   \Sigma _M^*(C_M + c_1)_r$.  We cannot have
  $ \pi _r(p)  \leq   \Sigma _M^*C_M$, since the last character $c_x$ cannot appear in a string
  in $C_M$.  Therefore, $ \pi _r(p)  \leq   \Sigma _M^*c_1$, from which we conclude.
\end{proofEnd}

\begin{theorem}[Completeness]
  \label{thm:completeness}
  Given a two-counter machine $M$ and some configuration $s  \leq  T_M$, we can
  compute a term $ \rho $ with the following property.  If $s  \to _{R_M}^* c_1$, then
  the following inequality is valid in \emph{pre}-Kleene algebra:
  \( sR_M^*  \leq   \Sigma _M^*(C_M + c_1)_r +  \Sigma _M^* \Sigma _M^{ \neq } \rho . \)
\end{theorem}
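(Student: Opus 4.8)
The plan is to read the inequality as a statement about the term $R_M$ viewed as a \emph{relation} and to exploit the fact that, although \Cref{thm:completeness} only needs to hold in the presence of an accepting run, pre-Kleene algebra lets us unfold $R_M^*$ only finitely often. So the first step is to run $M$ on the given configuration --- this terminates precisely because we assume $s \to_{R_M}^* c_1$, and it is from the resulting finite trace $s = s_0 \to s_1 \to \cdots \to s_n = c_1$ that $\rho$ (and the unfolding depth $n$) are computed.

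Using left unfolding $R_M^* = 1 + R_M R_M^*$ together with distributivity, one obtains the pre-Kleene-algebra identity $sR_M^* = \sum_{j=0}^{n-1} sR_M^j + sR_M^n R_M^*$, and it then suffices to bound each finite prefix $sR_M^j$ and the tail $sR_M^n R_M^*$ by the right-hand side. Viewing a term over $\ddot\Sigma_M$ as a pair of a ``left track'' and a ``right track'', the guiding intuition is a dichotomy: a string matched by $sR_M^j$ either follows the actual trace $s_0,\dots,s_j$, in which case its two tracks stand in a prefix relationship and it falls under $\Sigma_M^*(C_M + c_1)_r$; or it fires a transition from a configuration other than the one dictated by the trace, in which case the two tracks disagree at the character where the deviation first occurs and never re-synchronise, so the string falls under $\Sigma_M^*\Sigma_M^{\neq}\rho$. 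For the tail, after $n$ on-trace steps the right track ends in the terminal symbol $c_1$, which no further $R_M$ step can match on the left track, so the tail too is covered by the error summand; here one has to let $\rho$ mention $R_M^*$ itself (which is harmless, since $\rho$ only needs to be computable), because pre-Kleene algebra cannot collapse the residual $R_M^*$ into $\ddot\Sigma_M^*$.

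The real difficulty is that neither $R_M$ nor $sR_M^j$ is a finite term --- the encodings $\llbracket i \rrbracket$ contain $a^*$ and $b^*$ --- so the dichotomy cannot be checked by the finite-term reasoning of \Cref{thm:finite-term}. To handle this I would route the argument through the general completeness lemma of \Cref{sec:relations}, which proves the analogue of \Cref{thm:completeness} for any term-represented relation that satisfies a technical condition allowing one to reason about the image of a finitely represented set under the relation, and then verify that $R_M$ meets this condition. That verification is the substantive part: it uses the expansion lemma (\Cref{lem:expansion}), which expands a term so that all matched strings up to a chosen length are made explicit --- a derivative-style construction that crucially avoids the induction axioms --- together with its refinement for \emph{bounded-output} terms (\Cref{sec:bounded-output}). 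Since every configuration of a two-counter machine has a unique successor, $R_M$ is bounded-output, and this is exactly what keeps the relevant expansions finite, so that a single computable $\rho$ can absorb all the error pieces at once; \Cref{sec:proving-representability} and \Cref{sec:finalizing} are where these ingredients are assembled.

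I expect the main obstacle to be that last step: stating the image-reasoning interface so that it is simultaneously weak enough to be provable for $R_M$ inside pre-Kleene algebra --- using only finitely many unfoldings plus the bounded-output expansion, never $(x^*)^* = x^*$ or the induction laws --- and strong enough to drive the step-by-step argument in the general completeness lemma. Once the expansion machinery is in place, the unfolding identity, the trace/deviation dichotomy, and the behaviour of the terminal symbol $c_1$ should be conceptually routine.
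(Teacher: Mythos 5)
Your proposal is correct and follows essentially the same route as the paper: the accepting trace fixes the finite unfolding depth, the argument is routed through the representable-relation lemmas of \Cref{sec:relations} (with $\rho$ indeed containing $R_M^*$), and representability of $R_M$ is established via the expansion lemma for finite-state, bounded-output terms over the prefix-free $T_M$, exactly as in \Cref{lem:representable-sufficient} and \Cref{lem:transition-is-representable}. The only slight imprecision is that bounded output is formally the length bound $|\pi_r(s)| \leq (|\pi_l(s)|+1)k$, verified by closure properties (the starred subterms $a_la_r$ and $b_lb_r$ each contain a left symbol) rather than by determinism; determinism is instead what makes $\Next_{R_M}^i(s)$ a singleton for $i \leq n$ and empty afterwards in the final step.
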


The soundness theorem can be shown by establishing a correspondence between
traces of two-counter machines and the languages, then arguing that the language
inequality implies that \(M\) halts and outputs 1.  However, an inequality
between terms is always stronger than the same inequality on languages.  Thus,
for completeness, we need to establish a stronger inequality between terms.

To obtain undecidability from soundness and completeness, we leverage
\emph{effective inseparability}, a notion from computability theory.  In what
follows, we use the notation $ \langle x \rangle $ to refer to some effective
encoding of the object $x$ as a natural number.\footnote{%
  Note that we do not assume that this encoding is a functional relation. For
  example, we will need to encode pre-Kleene algebra terms as numbers.  Such a
  term is an equivalence class of syntax trees quotiented by provable equality.
  Thus, each term can be encoded as multiple natural numbers, one for each
  syntax tree in its equivalence class.  Nevertheless, by abuse of notation,
  we'll use the encoding notation as if it denoted a unique number.}

\begin{theoremEnd}{theorem}
  \label{thm:basic-inseparability}
  The following two languages are effectively inseparable:
  \begin{align*}
    A &  \triangleq  \{  \langle  M ,x \rangle   \mid  \text{The two-counter machine $M$ halts on $x$ and outputs 1}\} \\
    B &  \triangleq  \{  \langle  M ,x \rangle   \mid  \text{The two-counter machine $M$ halts on $x$ and outputs 0}\}.
  \end{align*}
  In other words, there is a partial computable function $f$ with the following
  property.  Given a machine $M$, let $W_M$ be the set of inputs accepted by
  $M$.  Suppose that $M_1$ and $M_0$ are such that
  $W_{M_1} \cap W_{M_0} = \emptyset$, $A \subseteq W_{M_1}$ and
  $B \subseteq W_{M_0}$.  Then $f\langle M_1, M_0 \rangle$ is defined and does
  not belong to $W_{M_1} \cup W_{M_0}$.
\end{theoremEnd}

\begin{proofEnd}
  We implement $f$ as follows. Given an input $x$, if $x$ does not encode a pair
  of machines, then the output is undefined.  Otherwise, suppose that
  $x = \langle M_1, M_0 \rangle$.  Construct a machine $M_\eta$ as follows.  On
  an input $x$, run $M_1$ and $M_0$ on $\langle x, x \rangle$ in parallel.  If
  $M_i$ accepts first, then halt and output $1 - i$. If neither accept, then
  just run forever.  We pose $f(x) = \langle M_\eta, M_\eta \rangle$.

  We need to show that $f(x) \notin W_{M_1} \cup W_{M_0}$ when
  $x = \langle M_1, M_0\rangle$ and the two machines satisfy the above
  hypotheses.  Suppose that $f(x) = \langle M_\eta, M_\eta \rangle \in W_{M_1}$.
  By the definition of $M_\eta$, this means that $M_\eta$ outputs 0 on
  $\langle M_\eta \rangle$.  Thus
  $\langle M_\eta, M_\eta \rangle \in B \subseteq W_{M_0}$. This contradicts the
  hypothesis that $W_{M_1} \cap W_{M_0} = \emptyset$.  Thus,
  $f(x) \notin W_{M_1}$.  An analogous reasoning shows that
  $f(x) \notin W_{M_0}$, which allows us to conclude.
\end{proofEnd}

Effective inseparability is a strengthening of the notion of inseparability,
which says that two sets cannot be distinguished by a total computable
function.  If we are just interested in the undecidability of the equational
theory, then basic inseparability is enough, as the following argument shows:

\begin{theorem}[Undecidability]
  \label{thm:undecidability}
  Let $ \Sigma   \triangleq  \{0,1\}$ be a discrete commutable set.  Suppose that we have a
  diagram of sets
  \begin{center}
    \begin{tikzcd}
      \mathcal{T}\ddot{ \Sigma } \ar[r, "l"] \ar[dr, "l'", swap] & \mathcal{L}\ddot{ \Sigma } \\
      & X, \ar[u]
    \end{tikzcd}
  \end{center}
  where $l'$ is computable.  Then equality on $X$ is undecidable.  In
  particular, equality is undecidable on $\mathcal{T}\ddot{ \Sigma }$,
  $\mathcal{K}\ddot{ \Sigma }$ and $\mathcal{L}\ddot{ \Sigma }$.
\end{theorem}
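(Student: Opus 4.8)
The plan is to turn a decision procedure for equality on $X$ into a total computable function separating the two effectively inseparable sets $A$ and $B$ of \Cref{thm:basic-inseparability}, contradicting their inseparability. Since $A$ and $B$ range over arbitrary two-counter machines, whose symbol sets $\Sigma_M$ are finite but unbounded, I first fix, for each machine $M$, an injective block code $\Sigma_M \hookrightarrow \mathcal{S}\{0,1\}$ sending every symbol to a distinct binary word of a single fixed length $k_M$. As all the commutable sets in sight are discrete, this code extends along the free functors to a computable morphism $\mathcal{T}\ddot{\Sigma}_M \to \mathcal{T}\ddot{\Sigma}$ (respecting the $\oplus$-structure, since a left word commutes with a right word and nothing else commutes) which commutes with the language interpretations, and whose language-level image admits unique decoding, so the induced map on languages reflects inclusions. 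Consequently every (in)equality furnished by \Cref{thm:soundness} and \Cref{thm:completeness} over $\ddot{\Sigma}_M$ transports verbatim to $\ddot{\Sigma}$; I suppress this encoding below.

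Given $\langle M, n\rangle$, let $s$ be the initial configuration $a^nb^0\dot{q}$ and let $\rho$ be the term computed by \Cref{thm:completeness}. Combining the two theorems --- which are phrased in slightly different but compatible shapes (one uses $\rho$ where the other uses $\ddot{\Sigma}_M^*$, and they differ on whether the left factor is $s$ or $s_r$), a mismatch reconciled by observing that $\rho \le \ddot{\Sigma}_M^*$ as languages and by a short direct computation in the style of the soundness proof --- one extracts two terms $e_1, e_2 \in \mathcal{T}\ddot{\Sigma}$, computable from $\langle M,n\rangle$, such that: (i) if $M$ halts on $n$ with output $1$, then $e_1 + e_2 = e_2$ is derivable in pre-Kleene algebra; and (ii) if $M$ halts on $n$ with output $0$, then $l(e_1 + e_2) \ne l(e_2)$ in $\mathcal{L}\ddot{\Sigma}$. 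Set $\widehat{e} \triangleq l'(e_1+e_2)$ and $\widehat{f} \triangleq l'(e_2)$, two elements of $X$ computable from $\langle M,n\rangle$ since $l'$ and $\rho$ are. In case (i), applying the function $l'$ to the pre-Kleene-algebra identity $e_1 + e_2 = e_2$ (which holds a fortiori in $\mathcal{T}\ddot{\Sigma}$) yields $\widehat{e} = \widehat{f}$. In case (ii), since $l = m \circ l'$ for the given map $m : X \to \mathcal{L}\ddot{\Sigma}$ and $m$ is a function, $m(\widehat{e}) \ne m(\widehat{f})$ forces $\widehat{e} \ne \widehat{f}$.

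Suppose now that equality on $X$ were decidable. Then $\langle M, n\rangle \mapsto [\,\widehat{e} = \widehat{f}\,]$ is a total computable predicate; let $S$ be its true-set. By (i) and (ii), $A \subseteq S$ and $B \cap S = \emptyset$. But effectively inseparable sets are recursively inseparable: given such a decidable $S$, let $M_1$ and $M_0$ be machines that halt-and-accept exactly on $S$ and on $\mathbb{N}\setminus S$ respectively, so that $W_{M_1} = S$ and $W_{M_0} = \mathbb{N}\setminus S$ are disjoint with $A \subseteq W_{M_1}$ and $B \subseteq W_{M_0}$; then \Cref{thm:basic-inseparability} produces a number $f\langle M_1, M_0\rangle \notin W_{M_1}\cup W_{M_0} = \mathbb{N}$, absurd. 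Hence equality on $X$ is undecidable. The three instances follow: $X = \mathcal{T}\ddot{\Sigma}$ with $l' = \mathrm{id}$ and $m = l$; $X = \mathcal{K}\ddot{\Sigma}$ with $l'$ the canonical quotient and $m$ the further quotient onto $\mathcal{L}\ddot{\Sigma}$; and $X = \mathcal{L}\ddot{\Sigma}$ with $l' = l$ and $m = \mathrm{id}$.

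I expect the only real work to be the middle paragraph, since \Cref{thm:undecidability} is essentially the packaging of soundness, completeness and inseparability and all the depth sits in those results. Concretely, the fiddly points are verifying that the block code is compatible with both the $\oplus$-construction and the language interpretation (so that both inequalities transport), and --- the genuine obstacle --- checking that the precise statements of \Cref{thm:soundness} and \Cref{thm:completeness} line up, i.e.\ that a single computable equation can simultaneously serve as the provable-in-preKA witness of an accepting run and, after harmless weakening, as the hypothesis whose failure over $\mathcal{L}$ witnesses a rejecting run.
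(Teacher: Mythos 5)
Your proposal is correct and follows essentially the same route as the paper's proof: encode $\Sigma_M$ into binary, map $\langle M,n\rangle$ to the pair $\langle l'(e_L+e_R), l'(e_R)\rangle$ built from the completeness inequality, use completeness for the accepting case and soundness (pulled back through $l = m\circ l'$ and the weakening $l(\rho)\le l(\Sigma_M^*)$) for the rejecting case, and conclude by inseparability of $A$ and $B$. Your extra care in spelling out why effective inseparability yields recursive inseparability, and in reconciling the $s$ versus $s_r$ discrepancy between the two theorems, only makes explicit steps the paper leaves implicit.
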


\begin{proof}
  Let $A$ and $B$ be the sets of \Cref{thm:basic-inseparability}.  Let's define
  a computable function $\eta : \Sigma^* \to \Sigma^*$ with the following
  properties:
  \begin{itemize}
  \item if $s \in A$, then $\eta(s) \in X_{=}$, where
    $X_{=} \triangleq \{\langle x, y \rangle \mid \text{$x$ and $y$ encode the
      same element of $X$}\}$.
  \item if $s \in B$, then $\eta(s) \notin X_{=}$.
  \end{itemize}
  Find a suitable encoding of the characters of $ \Sigma_M$ as binary strings,
  which leads to the following injective embeddings:
  \begin{center}
    \begin{tikzcd}
      \mathcal{T}\ddot{ \Sigma }_M \ar[r, "l"] \ar[d, hook]
      & \mathcal{L}\ddot{ \Sigma }_M \ar[d,hook] \\
      \mathcal{T}\ddot{ \Sigma } \ar[r, "l"] \ar[dr, "l'", swap] & \mathcal{L}\ddot{ \Sigma } \\
      & X. \ar[u]
    \end{tikzcd}
  \end{center}
  In what follows, we'll treat $\mathcal{T}\ddot{\Sigma}_M$ and
  $\mathcal{L}\ddot{\Sigma}_M$ as subsets of $\mathcal{T}\ddot{\Sigma}$ and
  $\mathcal{L}\ddot{\Sigma}$, to simplify the notation.

  Suppose that we are given some string $s \in \Sigma^*$.  We define $\eta(s)$
  as follows.  We can assume that $s$ is of the form $ \langle M, n \rangle$,
  where $M$ is a machine and $n \in \mathbb{N}$ (if $s$ is not of this form, we
  define the output as $\eta(s) = \langle l'(0), l'(1)\rangle $).  First, we
  compute the term $\rho$ of \Cref{thm:completeness}, using $a^nb^0\dot{q}$ as
  the initial configuration.  Next, let $e_L$ and $e_R$ be the left- and
  right-hand sides of the inequality of \Cref{thm:completeness}.  We pose
  \( \eta\langle M , n \rangle \triangleq \langle l'(e_L+e_R),l'(e_R)\rangle.
  \)

  If $\langle M, n \rangle \in A$, the inequality of \Cref{thm:completeness} is
  valid.  Thus, $e_L \le e_R$ holds, or, equivalently, $e_L + e_R = e_R$.  Thus,
  $l'(e_L+e_R) = l'(e_R)$ is valid, which implies that $\eta(s) \in X_{=}$.

  If, on the other hand, $M$ outputs $0$ on $n$ (that is,
  $\langle M, n \rangle \in B$), we claim that $\eta(s) \notin X_{=}$.  It
  suffices to prove $l'(e_L+e_R) \neq l'(e_R)$. Aiming for a contradiction,
  suppose that $l'(e_L+e_R) = l'(e_R)$. This implies
  $l(e_L+e_R) = l(e_L) + l(e_R) = l(e_R)$, which is equivalent to the inequality
  $l(e_L) \le l(e_R)$.  Let $e_R' \in \mathcal{T}\ddot{\Sigma}_M$ be the
  right-hand side of the inequality of \Cref{thm:soundness}.  We have
  $l(e_R) \le l(e_R')$ because $l(\rho) \le l(\Sigma_M^*)$.  Thus,
  $l(e_L) \le l(e_R')$.  However, by \Cref{thm:soundness}, this can only hold if
  $M$ outputs $1$, which contradicts our assumption.

  To conclude, suppose that $d : \Sigma^* \to \{0,1\}$ is a decider for $X_{=}$
  (that is, suppose that equality on $X$ is decidable).  Then $d \circ \eta$ can
  separate the sets $A$ and $B$, which contradicts
  \Cref{thm:basic-inseparability} because two effectively inseparable sets are
  also computationally inseparable. Therefore, such a $d$ cannot exist.
\end{proof}

However, if we also want a more precise characterization of the complexity of
this theory, the notion of effective inseparability is crucial.  The following
argument, which refines the previous proof, is based on Kuznetsov's
work~\cite{Kuznetsov23}.

\begin{theorem}[Complexity]
  If equalities in \(X\) (from~\cref{thm:undecidability}) are recursive
  enumerable, then equalities in \(X\) are \(\Sigma^0_1\)-complete.  In
  particular, equalities in $\mathcal{T}\ddot{ \Sigma }$ and
  $\mathcal{K}\ddot{ \Sigma }$ are \(\Sigma^0_1\)-complete.
\end{theorem}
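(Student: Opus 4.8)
The plan is to combine the machinery already developed in Theorem~\ref{thm:undecidability} with the extra strength of effective inseparability from Theorem~\ref{thm:basic-inseparability}. Recall that $\Sigma^0_1$-completeness of a set $S$ amounts to two things: (i) $S$ is recursively enumerable, which is assumed in the hypothesis; and (ii) $S$ is $\Sigma^0_1$-hard, i.e.\ every r.e.\ set many-one reduces to $S$. So the work is entirely in part (ii), and the key tool is the standard fact that a set which is r.e.\ and \emph{productive in the complement} (more precisely, a set that is one side of an effectively inseparable pair, with the other side r.e.) is automatically $\Sigma^0_1$-complete. Concretely, I will use the well-known theorem of computability theory: if $(A,B)$ is an effectively inseparable pair, $A$ is r.e., and $C$ is any r.e.\ set with $C \supseteq A$ and $C \cap B = \emptyset$, then $C$ is $\Sigma^0_1$-complete (equivalently, $C$ is creative). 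This is exactly Kuznetsov's strategy, and it is why we needed the \emph{effective} form of inseparability rather than mere inseparability.

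The steps, in order. First, I would recall the computable function $\eta : \Sigma^* \to \Sigma^*$ constructed in the proof of Theorem~\ref{thm:undecidability}, which satisfies $s \in A \Rightarrow \eta(s) \in X_{=}$ and $s \in B \Rightarrow \eta(s) \notin X_{=}$; that is, $\eta$ is a many-one reduction witnessing that $X_{=}$ separates $A$ from $B$. Second, I would observe that, under the hypothesis that equalities in $X$ are recursively enumerable, $X_{=}$ is an r.e.\ set; together with $\eta^{-1}(X_{=}) \supseteq A$ and $\eta^{-1}(X_{=}) \cap B = \emptyset$, this says that $X_{=}$ pulls back (via $\eta$) to an r.e.\ separator of the effectively inseparable pair $(A,B)$. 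Third, I would invoke the classical result: since $A$ is r.e.\ (it is clearly r.e.: simulate $M$ on $x$ and check the output is $1$) and $(A,B)$ is effectively inseparable, any r.e.\ set sandwiched between $A$ and the complement of $B$ is $\Sigma^0_1$-complete — in fact $1$-complete. Applying this to the set $\eta^{-1}(X_{=})$ gives $\Sigma^0_1$-hardness of $\eta^{-1}(X_{=})$, and since $\eta$ is a computable many-one reduction of $\eta^{-1}(X_{=})$ to $X_{=}$, composing reductions yields $\Sigma^0_1$-hardness of $X_{=}$ itself. Combined with the assumed r.e.-ness, $X_{=}$ is $\Sigma^0_1$-complete. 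Finally, for the stated instances, I would note that equalities in $\mathcal{T}\ddot{\Sigma}$ and $\mathcal{K}\ddot{\Sigma}$ are r.e.\ because these theories are axiomatized by finitely many clauses (so provable equality is semi-decidable), hence the general statement specializes to them, and the diagram of Theorem~\ref{thm:undecidability} with $l'$ the canonical computable map applies.

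The main obstacle, to the extent there is one, is bookkeeping the direction of the reduction correctly: $\eta$ reduces the \emph{separation problem} for $(A,B)$ to $X_{=}$, and one must be careful that the classical effective-inseparability-implies-creativity argument is stated so as to conclude completeness of $X_{=}$ rather than of $A$. The cleanest formulation is: effective inseparability of $(A,B)$ is preserved under "padding" a separator into an r.e.\ superset of $A$ disjoint from $B$, and such a set is creative hence $\Sigma^0_1$-complete; then transport along the many-one reduction $\eta$. I expect no genuinely hard new ideas here — the theorem is a corollary of Theorem~\ref{thm:undecidability}'s construction plus a textbook fact — but I would state the textbook fact precisely (citing a standard reference such as Rogers or Soare) to keep the argument self-contained.
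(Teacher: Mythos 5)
Your proposal is correct and follows essentially the same route as the paper: your set $\eta^{-1}(X_{=})$ is exactly the paper's auxiliary set $A'$, and the "r.e.\ separator of an effectively inseparable pair is $\Sigma^0_1$-complete" fact you invoke is precisely the folklore result (via Kuznetsov) the paper uses, followed by the same transport along $\eta$ as a many-one reduction to $X_{=}$. The only cosmetic difference is that the paper defines $A'$ by the semantic condition $l'(e_L+e_R)=l'(e_R)$ and then observes $A'=\eta^{-1}(X_{=})$, whereas you start from the preimage directly.
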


\begin{proof}
  Let $A' \triangleq \{ \langle M, n \rangle \mid l'(e_L+e_R) = l'(e_R) \}$,
  where $(M,n)$ is a machine-input pair and $e_L$ and $e_R$ are defined as in
  the proof of~\cref{thm:undecidability}.  By arguments in the proof
  of~\cref{thm:undecidability}, if $\langle M, n\rangle \in A$, then
  \(l'(e_L+e_R) = l'(e_R)\), which means \(A' \supseteq A\).  By similar
  arguments, $A' \cap B = \emptyset$.

  By folklore~\cite[Proposition 9]{Kuznetsov23}, \(A'\) and \(B\) are
  effectively inseparable because $A$ and $B$ are.  Moreover, note that both
  \(A'\) and \(B\) are in \(\Sigma^0_1\) --- membership in \(A'\) is recursively
  enumerable because we can compute $e_L$ and $e_R$ from $(M,n)$ and enumerate
  the possible proofs of $l'(e_L+e_R) = l'(e_R)$.  This implies that \(A'\) is
  \(\Sigma^0_1\)-complete~\cite[Proposition 7]{Kuznetsov23}, and therefore
  \(\Sigma^0_1\)-hard.

  The function $\eta$ in the proof of \Cref{thm:undecidability} has the property
  that $\eta(s) \in X_{=}$ if and only if $s \in A'$. (This relies on the fact
  that we defined $\eta(s) = \langle l'(0), l'(1)\rangle$ when $s$ is not the
  encoding of a machine-input pair, and that $l'(0) \neq l'(1)$ because
  $l(0) \neq l(1)$).  In other words, $\eta$ is a reduction from $A'$ to
  $X_{=}$, which proves $X_{=}$ is $\Sigma^0_1$-hard.  We conclude because we
  assumed that equality on $X$ is in $\Sigma^0_1$.
\end{proof}

Thus, to establish undecidability, we need to prove soundness and completeness.
The easiest part is proving soundness: we just need to adapt the proof of
undecidability of equations of $*$-continuous Kleene algebras with commutativity
conditions~\cite{Kozen_1996}.
For completeness, however, we need to do some more work. Roughly speaking, we
first prove that $R_M$ satisfies an analogue of the completeness theorem for a
single transition, and then show that this version implies a more general one
for an arbitrary number of transitions (\Cref{sec:relations}).

The main challenge for proving the single-step version of completeness is that
we can no longer leverage properties of regular languages, and must reason
solely using the laws of pre-Kleene algebra.  Our strategy is to show that $R_M$
is just as good as its corresponding regular language if we want to reason about
\emph{prefixes} of matched strings.  Given any string $s' \leq R_M$ and a
current state $s$, we can tell whether $s'$ encodes a valid sequence of
transitions or not simply by looking at some finite prefix determined by $s$.
This finite prefix can be extracted by unfolding $R_M$ finitely many times,
which can be done in the setting of preKA.

\section{Representing Relations}
\label{sec:relations}

In this section, we show that we can reduce the statement of completeness to a
similar statement about single transitions.  If $e  \in  \mathcal{T}\ddot{ \Sigma }$ and
$ \Lambda   \subseteq  \mathcal{S} \Sigma $ is a set of strings, we write $\Next_e( \Lambda )$ to denote the
image of $ \Lambda $ by $ \to _e$; that is, the set $ \bigcup _{s  \in   \Lambda } \{ s'  \mid  s  \to _e s' \}$.

\begin{definition}
  \label{def:representable-relation}
  Let $L  \in  \mathcal{T} \Sigma $ be term.  We say that a term $e  \in  \mathcal{T}\ddot{ \Sigma }$
  is a \emph{representable relation} on $L$ if the following conditions hold:
  \begin{itemize}
  \item $ \pi _l(e)  \leq  L$;
  \item $ \pi _r(e)  \leq  L$;
  \item $\Next_e( \Lambda )$ is finite if $ \Lambda $ is (note that we must have
    $\Next_e( \Lambda )  \leq   \pi _r(e)  \leq  L$);
  \item there exists some \emph{residue term} $ \rho $ such that
    $ \Lambda _re  \leq   \Lambda \Next_e( \Lambda )_r +  \Sigma ^* \Sigma ^{ \neq } \rho $ for every finite $ \Lambda $.
  \end{itemize}
  We write $e : \Rel(L)$ to denote the type of $e$.
\end{definition}

Given a representable relation $e$, we can iterate the above inequality several
times when reasoning about its reflexive transitive closure $e^*$:

\begin{theoremEnd}{lemma}
  \label{lem:representable-star}
  Suppose that $e : \Rel(L)$.  There exists some $ \rho $ such that, for every
  $n  \in  \mathbb{N}$ and every finite $ \Lambda   \leq  L$, we have the inequality
  \(  \Lambda _re^*  \leq   \Sigma ^*\Next_e^{<n}( \Lambda )_r +  \Sigma ^*\Next_e^n( \Lambda )_re^* +  \Sigma ^* \Sigma ^{ \neq } \rho , \) where
  $\Next_e^{<n} =  \bigcup _{i < n}\Next_e^i( \Lambda )$.
\end{theoremEnd}

\begin{proofEnd} %
  Let $ \rho   \triangleq   \rho 'e^*$, where $ \rho '$ is the residue of $e$. Abbreviate $ \Sigma ^* \Sigma ^{ \neq } \rho $ as
  $ \varepsilon $.  We proceed by induction on $n$.  If $n = 0$, then the goal becomes
  $ \Lambda _r e^*  \leq   \Sigma ^*\Next_e^0( \Lambda )_re^* +  \varepsilon $, which holds because $\Next_e^0( \Lambda ) =  \Lambda $.

  Otherwise, for the inductive step, suppose that the goal is valid for $n$.  We
  need to prove that it is valid for $n + 1$.  Recall that
  $ \Lambda '  \triangleq  \Next_e( \Lambda )  \leq  L$. We have
  \begin{align*}
    &  \Lambda _re^* \\
    & =  \Lambda _r +  \Lambda _ree^* \\
    &  \leq   \Lambda _r +  \Lambda \Next_e( \Lambda )_re^* +  \varepsilon 
    & \text{($e$ is representable)} \\
    & =  \Lambda _r +  \Lambda  \Lambda '_re^* +  \varepsilon  \\
    &  \leq   \Lambda _r %
      +  \Lambda \left( \Sigma ^*\Next_e^{<n}( \Lambda ')_r+ \Sigma ^*\Next_e^n( \Lambda ')_re^* +  \varepsilon \right) +  \varepsilon 
    & \text{I.H.} \\
    & =  \Lambda _r +  \Lambda  \Sigma ^*\Next_e^{<n}( \Lambda ')_r+ \Lambda  \Sigma ^*\Next_e^n( \Lambda ')_re^* +  \Lambda  \varepsilon  +  \varepsilon  \\
    &  \leq   \Sigma ^* \Lambda _r +  \Sigma ^*\Next_e^{<n}( \Lambda ')_r+ \Sigma ^*\Next_e^{n}( \Lambda ')_re^* +  \varepsilon  +  \varepsilon 
    & \text{($ \Lambda $ is finite)} \\
    & =  \Sigma ^*\Next_e^0( \Lambda )_r +  \Sigma ^*\Next_e^{<n}( \Lambda ')_r+ \Sigma ^*\Next_e^{n}( \Lambda ')_re^* +  \varepsilon  \\
    & =  \Sigma ^*\Next_e^{<n+1}( \Lambda )_r +  \Sigma ^*\Next_e^{n+1}( \Lambda )_re^* +  \varepsilon . & \qedhere
  \end{align*}
\end{proofEnd}

If we know that the number of transitions from a given set of initial states is
bounded, we obtain the following result.

\begin{theoremEnd}{theorem}
  \label{thm:representable-star-finite}
  If $e : \Rel(L)$, there exists $ \rho $ such that, given $n  \in  \mathbb{N}$ and a finite
  $ \Lambda   \leq  L$, if $\Next_e^n( \Lambda ) =  \emptyset $, then
  \(  \Lambda _re^*  \leq   \Sigma ^*\Next_e^{< n}( \Lambda )_r +  \Sigma ^* \Sigma ^{ \neq } \rho . \)
\end{theoremEnd}

\begin{proofEnd} %
    Choose the same $ \rho $ as in \Cref{lem:representable-star}.  Then
  \begin{align*}
    &  \Lambda _re^* \\
    &  \leq   \Sigma ^*\Next_e^{<n}( \Lambda )_r +  \Sigma ^*\Next_e^n( \Lambda )_re^* +  \Sigma ^* \Sigma ^{ \neq } \rho 
    & \text{by \Cref{lem:representable-star}} \\
    & =  \Sigma ^*\Next_e^{<n}( \Lambda )_r +  \Sigma ^* \Sigma ^{ \neq } \rho . & \qedhere
  \end{align*}
\end{proofEnd}

\section{Proving Representability}
\label{sec:proving-representability}

In this section, we prove that the transition relation $R_M$ of a two-counter
machine is a representable relation, which will allow us to derive completeness
from \Cref{thm:representable-star-finite}.  To do this, we need to show how we
can use finite unfoldings of a relation to pinpoint certain terms that
definitely match the ``error'' term $ \Sigma _M^* \Sigma _M^{ \neq } \rho $.

\subsection{Automata theory}
\label{sec:automata}

One of the pleasant consequences of working with Kleene algebra is that many
intuitions about regular languages carry over.  In particular, we can analyze
terms by characterizing them as automata.  This can be done algebraically by
posing certain \emph{derivative operations} $ \delta _x$ on terms, which satisfy
a \emph{fundamental theorem}~\cite{Silva_2010}: given a term
$e \in \mathcal{K}X$, we have $e = e_0 + \sum _{x \in X} x \cdot \delta _x(e)$,
where $e_0 \in \{0,1\}$.  Intuitively, each term in this equation corresponds to
a state of some automaton.  The term $e$ corresponds to the starting state of
the automaton, the null term $e_0$ states whether the starting state is
accepting, and each $ \delta _x(e)$ the state we transition too after observing
the character $x \in X$.  Derivatives can be iterated, describing the behavior
of the automaton as it reads larger and larger strings, and which of those
strings are accepted by it.  This would be useful for our purposes, because such
iterated derivatives would allow us to compute all prefixes up to a given length
that can match an expression.  Unfortunately, this theory of derivatives hinges
on the induction properties of Kleene algebra, and it is unlikely that it can be
adapted in all generality to the preKA setting.  For example, the closest we can
get to an expansion for $1^*$ is $1^* = 1 + 1 \cdot 1^* = 1 + 1^*$, which does
not have the required form.
Indeed, as demonstrated by~\cref{exp:preKA-that-is-not-a-WKA}, 
the star operation no longer preserves the multiplicative identity in preKA.

To remedy this issue, we are going to carve out a set of so-called
\emph{finite-state terms} of a pre-Kleene algebra, for which this type of
reasoning is sound.  Luckily, most regular operations preserve finite-state
terms; we just need to be a little bit careful with the star operation.  We
start by defining \emph{derivable} terms, which can be derived at least once.
Finite-state terms will then allow us to iterate derivatives.

\begin{definition}
  Let $e  \in  \mathcal{T}X$ be a term, where $X$ is finite.  We say that $e$ is
  \emph{derivable} if there exists a family of terms $\{ \delta _x(e)\}_{x  \in  X}$ such
  that $e = [e]_0 +  \sum _x x \delta _x(e)$.  
  Recall \([-]_0\) is the homomorphism \([-]_0: \mathcal{T}X  \to  \mathbb{2}\) such that \([e]_0 = 1  \iff  e  \geq  1\).
  We refer to the term $ \delta _x(e)$ as the
  \emph{derivative with respect to $x$}.
\end{definition}

The family $ \delta _x(e)$ is not necessarily unique.  Nevertheless, we'll use the
notation $ \delta _x(e)$ to refer to specific derivatives of $x$ when it is clear from
the context which one we mean.

\begin{theoremEnd}{lemma}
  \label{lem:derivable-closure}
  Derivable terms are closed under all the pre-Kleene algebra operations, with
  the following caveats: for $e^*$, we also require that $[e]_0 = 0$; for
  $e_1e_2$, the term is also derivable if $e_2$ isn't, provided that
  $[e_1]_0=0$.  We have the following choices of derivatives:
  \begin{align*}
     \delta _x(0) & = 0 &
     \delta _x(1) & = 0 \\
     \delta _x(x) & = 1 &
     \delta _x(y) & = 0 \quad \text{if $y  \neq  x$} \\
     \delta _x(e_1 + e_2) & =  \delta _x(e_1) +  \delta _x(e_2) &
     \delta _x(e_1e_2) & = [e_1]_0 \delta _x(e_2) +  \delta _x(e_1)e_2 \\
     \delta _x(e^*) & =  \delta _x(e)e^*,
  \end{align*}
  where, by abuse of notation, we treat $[e_1]_0 \delta _x(e_2)$ as 0 when $e_2$ is not
  necessarily derivable (since, by assumption, $[e_1]_0 = 0$ in that case).
\end{theoremEnd}

\begin{proofEnd} %
  We prove the closure property for products and star. For products, we start by
  expanding $e_1$:
  \begin{align*}
    e_1e_2
    & = \left([e_1]_0 +  \sum _x x \delta _x(e_1)\right)e_2 \\
    & = [e_1]_0e_2 +  \sum _x x \delta _x(e_1)e_2.
  \end{align*}

  If $[e_1]_0=0$, the first term gets canceled out, and we obtain
  $ \sum _xx \delta _x(e_1)e_2 = [e_1]_0[e_2]_0 +  \sum _xx \delta _x(e_1)e_2$.  Otherwise, we know that
  $e_2$ is derivable, and we proceed as follows:
  \begin{align*}
    e_1e_2
    & = [e_1]_0\left([e_2]_0 +  \sum _x x  \delta _x(e_2)\right) +  \sum _x x \delta _x(e_1)e_2 \\
    & = [e_1]_0[e_2]_0 +  \sum _x [e_1]_0 x  \delta _x(e_2) +  \sum _x x \delta _x(e_1)e_2 \\
    & = [e_1]_0[e_2]_0 +  \sum _x x ([e_1]_0 \delta _x(e_2) +  \delta _x(e_1)e_2)
    & (\text{because $[e_1]_0x = x[e_1]_0$}),
  \end{align*}
  which allows us to conclude.

  For star, assuming that $[e]_0 = 0$, we note that $e^* = 1 + ee^*$, and we
  apply the closure properties for the other operations. %
\end{proofEnd}

\begin{definition}
  \label{def:finite-state}
  Suppose that $X$ is finite.  A \emph{finite-state automaton} is a finite set
  $S$ of elements of $\mathcal{T}X$ (the \emph{states}) that contains 1, is
  closed under finite sums and under derivatives (that is, every $e \in S$ is
  derivable, and each $ \delta _x(e)$ is a state).  We say that a term $e$ is
  \emph{finite state} if it is a state of some finite-state automaton $S$.
\end{definition}

Requiring that the states of an automaton be closed under sums means, roughly
speaking, that we are working with non-deterministic rather than deterministic
automata, generalizing the notion of Antimirov's derivative~\cite{antimirov_PartialDerivativesRegular_1996}.
This treatment is convenient for the commutative setting, since a given string
could be matched by choosing different orderings of its characters.

\iffull%
However, finite-state automata, as defined in~\Cref{def:finite-state}
can be hard to construct directly. We remedy this difficulty
by defining the notion of \emph{pre-automaton}, which is more flexible,
and then prove that every pre-automaton can be naturally extended to an automaton.

\begin{theoremEnd}{lemma}
  \label{lem:generated-automaton}
  Given a finite commutable set \(X\), a \emph{pre-automaton} is a finite set $S$ of
  terms over $X$ such that every $e  \in  S$ is derivable, and $ \delta _x(e)$ is a sum of
  some elements in $S  \cup  \{1\}$.  The set
  $\bar{S}  \triangleq  \{ \sum _{i = 1}^n e_i  \mid  n  \in  \mathbb{N}, e  \in  (S  \cup  \{1\})^n\}$ is a finite-state
  automaton.  We refer to $\bar{S}$ as the automaton generated by the
  pre-automaton $S$.
\end{theoremEnd}

\begin{proofEnd}
  It is easy to show that $\bar{S}$ is finite, contains 1 and is closed under
  finite sums.  We just need to show that it is closed under taking derivatives.
  This follows from \Cref{lem:derivable-closure}.
\end{proofEnd}
\fi

Finite-state terms can, in fact, be inductively constructed from the operations
of pre-Kleene algebra, thus making the identification of a finite-state term
trivial.

\begin{theoremEnd}{lemma}
  \label{lem:finite-state-closure}
  Let $X$ be a finite commutable set. Finite-state terms are preserved by all
  the pre-Kleene algebra operations (for $e^*$, we additionally require that
  $[e]_0 = 0$).  Moreover, the set of states of the corresponding automata can
  be effectively computed.
\end{theoremEnd}

\begin{proofEnd} %
  Let's consider all the cases.
  \begin{itemize}
  \item The set $\{0,1\}$ is an automaton by \Cref{lem:derivable-closure}.
    Therefore, $0$ and $1$ are finite state.
  \item By \Cref{lem:derivable-closure}, if $x$ is a symbol, the set $S = \{x\}$
    is a pre-automaton.  Therefore, $x$ is finite state because it belongs to
    the automaton $\bar{S}$.
  \item Suppose that $S_1$ and $S_2$ are finite automata.  By
    \Cref{lem:derivable-closure}, the set
    $S = \{e_1 + e_2  \mid  e_1  \in  S_1, e_2  \in  S_2\}$ is a pre-automaton.  Therefore,
    if we have finite-state terms $e_1$ and $e_2$ of $S_1$ and $S_2$, their sum
    $e_1+e_2$ is finite state because it belongs to the automaton $\bar{S}$.
  \item Suppose that $S_1$ and $S_2$ are finite automata.  By
    \Cref{lem:derivable-closure}, the set
    $S = \{e_1e_2 \mid  e_1  \in  S_1, e_2  \in  S_2\}$ is a pre-automaton.  Indeed,
    $ \delta _x(e_1e_2) = [e_1]_0 \delta _x(e_2) +  \delta _x(e_1)e_2$ is a sum of elements of $S$,
    since
    \begin{align*}
      [e_1]_0 &  \in  S_1 \\
       \delta _x(e_2) &  \in  S_2 \\
       \delta _x(e_1) &  \in  S_1 \\
      e_2 &  \in  S_2.
    \end{align*}
    Therefore, if we have finite-state terms $e_1$ and $e_2$ of $S_1$ and $S_2$,
    their product $e_1e_2$ is finite state because it belongs to the automaton
    $\bar{S}$.
  \item Suppose that $e$ is a state of some automaton $S$ such that $[e]_0 = 0$.
    Define $S' = \{ e'e^*  \mid  e'  \in  S \}$.  By \Cref{lem:derivable-closure}, this
    set is a pre-automaton.  Indeed,
    \begin{align*}
       \delta _x(e'e^*)
      & = [e']_0 \delta _x(e^*) +  \delta _x(e')e^* \\
      & = [e']_0 \delta _x(e)e^* +  \delta _x(e')e^* \\
      & = ([e']_0 \delta _x(e)+ \delta _x(e'))e^*.
    \end{align*}
    The terms $ \delta _x(e)$ and $ \delta _x(e')$ are in $S$. Thus, $[e']_0 \delta _x(e)  \in  S$ and
    $ \delta _x(e'e^*)$ is a sum of terms of $S'$.  Since $e^* = 1e^*$ is an element of
    $S'$, then it is a state of $\bar{S}'$, and $e^*$ is finite state. \qedhere
  \end{itemize}
\end{proofEnd}

Furthermore, since terms in a finite-state automaton are closed under
derivatives, we can unfold them via derivatives \(k\) times.  This unfolding
will turn a term into a sum of some strings that are shorter than \(k\); and
some strings \(s\) with length exact \(k\), followed the residual expressions
\(e_{s}\) indexed by \(s\).  Formally, we can express this property as follows.

\begin{theoremEnd}{lemma}
  \label{lem:expansion}
  Let $e  \in  \mathcal{T}X$ be a state of a finite-state automaton $S$, and
  $k  \in  \mathbb{N}$.  We can write
  \[ e =  \sum \{s \mid s  \in  \mathcal{S}X, s  \leq  e, |s| < k\} + %
     \sum \{se_s \mid s  \in  \mathcal{S}X, |s| = k\}, \] where each $e_s  \in  S$ for
  all $s$, and the size $|s|  \in  \mathbb{N}$ of a string $s$ is defined by mapping every
  symbol of $s$ to $1  \in  \mathbb{N}$.
\end{theoremEnd}

\begin{proofEnd} %
  By induction on $k$.  When $k = 0$, the equation is equivalent to $e = e$, and
  we are done.  Otherwise, suppose that the result is valid for $k$.  We need to
  prove that it is also valid for $k+1$.  Write
  \[ e =  \sum _{\substack{s  \in  \mathcal{S}X \\ s  \leq  e \\ |s| < k}} s + %
     \sum _{\substack{ s  \in  \mathcal{S}X \\ |s| = k}} se_s. \] By deriving each $e_s$, we can
  rewrite this as
  \begin{align}
    e
    & =  \sum _{\substack{s  \in  \mathcal{S}X \\ s \leq  e \\ |s| < k}} s + %
     \sum _{\substack{s  \in  \mathcal{S}X \\ |s| = k}}s\left([e_s]_0 +  \sum _{x  \in  X} x \delta _x(e_s)\right) \nonumber \\
    & =  \sum _{\substack{s  \in  \mathcal{S}X \\ s  \leq  e \\ |s| < k}} s + %
     \sum _{\substack{s  \in  \mathcal{S}X \\ |s| = k}} s[e_s]_0 + %
     \sum _{\substack{s  \in  \mathcal{S}X \\ |s|=k}} \sum _{x  \in  X}sx \delta _x(e_s).
    \label{eq:expansion-1}
  \end{align}
  We can see that $[e_s]_0 = 1$ if and only if $s  \leq  e$: by taking the language
  interpretation of \labelcref{eq:expansion-1}, we can see that a string of size
  $k$ can only belong to the middle term, since the left and right terms can
  only account for strings of strictly smaller or larger size, respectively.
  Thus, we can rewrite \labelcref{eq:expansion-1} as
  \begin{align}
    e
    & =  \sum _{\substack{s  \in  \mathcal{S}X \\ s  \leq  e \\ |s| < k}} s + %
     \sum _{\substack{s  \in  \mathcal{S}X \\ |s| = k \\ s  \leq  e}} s[e_s]_0 + %
     \sum _{s, |s|=k} \sum _{x  \in  X}sx \delta _x(e_s) \nonumber \\
    & =  \sum _{\substack{s  \in  \mathcal{S}X \\ s  \leq  e \\ |s| < k+1}} s + %
     \sum _{\substack{s  \in  \mathcal{S}X \\ |s|=k}} \sum _{x  \in  X}sx \delta _x(e_s).
    \label{eq:expansion-2}
  \end{align}

  Given some string $s$ with $|s| = k+1$, define
  \begin{align*}
    e'_s
    &  \triangleq 
       \sum _{\substack{(s', x)  \in  \mathcal{S}X  \times  X \\ s = s'x}}  \delta _x(e_{s'}).
  \end{align*}
  This sum is well defined because there are only finitely many $s'$ and $x  \in  X$
  such that $s = s'x$: $s'$ must be of size $k$, and there are only finitely
  many such strings.  Moreover, $e'_s$ is an element of $S$, since $S$ is closed
  under taking derivatives and finite sums. We have
  \begin{align*}
    se'_s
    & =  \sum _{\substack{(s',x) \\ |s'| = k \\ s = s'x}} s \delta _x(e_{s'}) \\
    & =  \sum _{\substack{(s',x) \\ |s'| = k \\ s = s'x}} s'x \delta _x(e_{s'}).
  \end{align*}
  Therefore,
  \begin{align*}
     \sum _{\substack{s \\ |s|=k+1}} se'_s
    & =  \sum _{\substack{s \\ |s|=k+1}} \sum _{\substack{(s',x) \\ |s'| = k\\ s = s'x}} s'x \delta _x(e_{s'}) \\
    & =  \sum _{\substack{(s',x) \\ |s'| = k}} s'x \delta _x(e_{s'}) \\
    & =  \sum _{\substack{s' \\ |s'| = k}} \sum _{x  \in  X} s'x \delta _x(e_{s'}).
  \end{align*}

  Putting everything together, \labelcref{eq:expansion-2} becomes
  \begin{align}
    e & =  \sum _{\substack{s  \leq  e, |s| < k+1}} s +  \sum _{\substack{s \\ |s|=k+1}}se'_s
    \label{eq:expansion-3},
  \end{align}
  which completes the inductive case.
\end{proofEnd}

\subsection{Bounded-Output Terms}
\label{sec:bounded-output}

\Cref{lem:expansion} gives us almost what we need to prove that the transition
term $R_M$ is a representable relation. It allows us to partition \(R_M\) into
strings $s$ of length bounded by $k$ and terms of the form $se_s$, which match
strings prefixed by $s$ of length greater than $k$.  The first component, the
strings $s$, can be easily shown to satisfy the upper bound required for being
representable. However, the prefixes $s$ that appear in the terms $se_s$ are
arbitrary, and, since we are working with pre-Kleene algebra, there isn't much
we can leverage to show that such prefixes will yield a similar bound.  The
issue is that, in principle, in order to tell whether
$s'_rse_s  \leq   \Sigma _M^* \Sigma _M^{ \neq } \rho $, we might need to unfold $e_s$ arbitrarily deep,
which we cannot do in the preKA setting.  To rule out these issues, we introduce
a notion of \emph{bounded-output terms}, which guarantee that only a finite
amount of unfolding is necessary.

\begin{definition}
  \label{def:bounded-output}
  Let $e  \in  \mathcal{T}\ddot{ \Sigma }$ be a term.  We say that $e$ has \emph{bounded
    output} if there exists some $k  \in  \mathbb{N}$ (the \emph{fanout}) such that, for
  every string $s  \leq  e$, $| \pi _r(s)|  \leq  (| \pi _l(s)|+1)k$.
\end{definition}

\begin{theoremEnd}{lemma}
  \label{lem:step-finite}
  Let $e$ have bounded output with fanout $k$ and let $ \Lambda $ be finite.  If
  $s  \in  \Next_e( \Lambda )$, then $|s|  \leq  (m+1)k$, where $m = \max\{ |s'|  \mid  s'  \in   \Lambda \}$.
  Thus, since $ \Sigma $ is finite, $\Next_e( \Lambda )$ is finite.
\end{theoremEnd}

\begin{proofEnd}
  If $s  \in  \Next_e( \Lambda )$, by definition, there exists $s'  \in   \Lambda $ such that
  $s'_ls_r  \leq  e$.  Since $e$ has fanout $k$, we have
  \[ |s| = | \pi _r(s'_ls_r)|  \leq  (| \pi _l(s'_ls_r)|+1)k = (|s|+1)k  \leq  (n+1)k. \qedhere\]
\end{proofEnd}

\begin{theoremEnd}{lemma}
  \label{lem:bounded-output-closure}
  Bounded-output terms are closed under all the pre-Kleene algebra
  operations. For $e^*$, we additionally require that $| \pi _l(s)|  \geq  1$ for all
  strings $s  \leq  e$.
\end{theoremEnd}

\begin{proofEnd}
  Let's focus on the last point.  Suppose that $e$ has fanout $k$ and that
  $| \pi _l(s)|  \geq  1$ for every $s  \leq  e$.  We are going to show that $e^*$ has bounded
  output with fanout $2k$.

  Suppose that $s  \leq  e^*$.  We can write $s = s_1  \cdots  s_n$ such that $s_i  \leq  e$ for
  every $i  \in  \{1,  \ldots , n\}$.  We have, for every $i  \in  \{1, \ldots ,n\}$,
  $| \pi _r(s_i)|  \leq  (| \pi _l(s_i)| + 1)k.$ Thus,
  \begin{align*}
    | \pi _r(s)|
    & =  \sum _{i=1}^n | \pi _r(s_i)| \\
    &  \leq   \sum _{i=1}^n (| \pi _l(s_i)| + 1)k \\
    &  \leq   \sum _{i=1}^n 2| \pi _l(s_i)|k
    & (\text{because $| \pi _l(s_i)|  \geq  1$}) \\
    & = \left( \sum _{i=1}^n| \pi _l(s_i)|\right)2k \\
    & = | \pi _l(s_0)  \cdots   \pi _l(s_n)|2k\\
    & = | \pi _l(s_0  \cdots  s_n)|2k  \\
    & = | \pi _l(s)|2k \\
    &  \leq  (| \pi _l(s)| + 1)2k. & \qedhere
  \end{align*}
\end{proofEnd}

For bounded-output terms, we can improve the expansion of \Cref{lem:expansion}.

\begin{theoremEnd}{lemma}
  \label{lem:expansion-bounded-output}
  Let $e  \in  \mathcal{T}\ddot{ \Sigma }$ be a bounded-output term that is the state of
  some automaton $S$. There exists some $k  \in  \mathbb{N}$ such that $e$ has fanout $k$ and
  such that, for every $n  \in  \mathbb{N}$, we can write
  \begin{align*}
    e
    & =  \sum \{s \mid s  \leq  e, |s| < n\} + %
     \sum \{se_s \mid s  \in  \mathcal{S}\ddot{ \Sigma }, |s|=n, | \pi _r(s)|  \leq  (| \pi _l(s)|+1)k\},
  \end{align*}
  where $e_s  \in  S$ for every $s$.
\end{theoremEnd}

\begin{proofEnd}
  Let $k_0$ be the fanout of $e$.  For each $e'  \in  S$ such that $e'  \neq  0$, choose
  some string $w_{e'}  \leq  e'$.  Define
  $m  \triangleq  \max\{| \pi _l(w_{e'})|  \mid  e'  \in  S, e'  \neq  0 \}$ and $k \triangleq (m+1)k_0$. Since
  $k  \geq  k_0$, we know that $e$ has fanout $k$.  Moreover, by
  \Cref{lem:expansion}, we have
  \begin{align*}
    e
    & =  \sum _{\substack{s  \leq  e\\ |s| < n}} s + %
     \sum _{\substack{s  \in  \mathcal{S}X \\ |s|=n}} se_s \\
    & =  \sum _{\substack{s  \leq  e \\ |s| < n}} s + %
     \sum _{\substack{s  \in  \mathcal{S}X \\ |s|=n \\ e_s  \neq  0}}se_s,
  \end{align*}
  where each $e_s$ is a state of $S$.  If $s$ is such that $|s|=n$ and
  $e_s  \neq  0$, we have $sw_{e_s}  \leq  e$.  Therefore,
  \begin{align*}
    | \pi _r(s)|
    &  \leq  | \pi _r(sw_{e_s})| \\
    &  \leq  (| \pi _l(sw_{e_s})|+1)k_0 \\
    & = (| \pi _l(s)|+| \pi _l(w_{e_s})|+1)k_0 \\
    &  \leq  (| \pi _l(s)|+m+1)k_0 \\
    &  \leq  (| \pi _l(s)|+1)(m+1)k_0 \\
    & = (| \pi _l(s)|+1)k.
  \end{align*}
  Thus,
  \begin{align*}
    e
    & =  \sum _{\substack{s  \leq  e \\ |s| < n}} s + %
     \sum _{\substack{s \\ |s|=n \\ e_s  \neq  0 \\ | \pi _r(s)| \leq (| \pi _l(s)|+1)k} }se_s \\
    & =  \sum _{\substack{s  \leq  e \\ |s| < n}} s + %
     \sum _{\substack{s \\ |s|=n \\ | \pi _r(s)| \leq (| \pi _l(s)|+1)k}} se_s. \qedhere
  \end{align*}
\end{proofEnd}

\begin{definition}
  \label{def:prefix-free}
  A term $L$ over $ \Sigma $ is \emph{prefix free} if for all strings $s_1  \leq  L$ and
  $s_2  \leq  L$, if $s_1$ is a prefix of $s_2$, then $s_1 = s_2$.
\end{definition}

\begin{theoremEnd}{lemma}[normal]
  \label{lem:mismatch}
  Let $s$ and $s'$ be two strings over $ \Sigma $ such that one is not a prefix of the
  other, or vice versa.  Then we can write $s = s_0xs_1$ and $s' = s_0x's_1'$
  with $x  \neq  x'$.  Thus, $s_rs'_l\ddot{ \Sigma }^*  \leq   \Sigma ^* \Sigma ^{ \neq }\ddot{ \Sigma }^*.$
\end{theoremEnd}

\begin{proofEnd}
  By induction on the length of $s$.
\end{proofEnd}

\begin{theoremEnd}{lemma}
  \label{lem:representable-sufficient}
  Suppose that $e  \in  \mathcal{T}\ddot{ \Sigma }$ is such that $ \pi _l(e)  \leq  L$ and
  $ \pi _r(e)  \leq  L$, where $L$ is prefix free.  Suppose, moreover, that $e$ is
  finite-state and has bounded output.  Then $e : \Rel(L)$.
\end{theoremEnd}

\begin{proofEnd}
  We have already seen that $\Next_e( \Lambda )$ is finite when $ \Lambda $ is
  (\Cref{lem:step-finite}).  Thus, we need to find some $ \rho $ such that, for every
  finite $ \Lambda $,
  \[  \Lambda _re  \leq   \Lambda \Next_e( \Lambda )_r +  \Sigma ^* \Sigma ^{ \neq } \rho . \] Define $ \rho   \triangleq  \ddot{ \Sigma }^* \rho _e$, where
  $ \rho _e$ is the greatest element of the automaton of $e$. It suffices to prove
  the result for the case $ \Lambda  = \{s\}$.  Indeed, if the result holds for
  singletons, we have
  \begin{align*}
     \Lambda _re
    & =  \sum _{s  \in   \Lambda } s_re \\
    &  \leq   \sum _{s  \in   \Lambda } s\Next_e(s)_r +  \Sigma ^* \Sigma ^{ \neq } \rho 
    & \text{by assumption} \\
    &  \leq   \sum _{s  \in   \Lambda }  \Lambda \Next_e(s)_r +  \Sigma ^* \Sigma ^{ \neq } \rho  \\
    & =  \Lambda  \sum _{s  \in   \Lambda }\Next_e(s) +  \Sigma ^* \Sigma ^{ \neq } \rho  \\
    & =  \Lambda \Next_e( \Lambda ) +  \Sigma ^* \Sigma ^{ \neq } \rho .  & \qedhere
  \end{align*}

  Let $k$ be the constant of \Cref{lem:expansion-bounded-output} for $e$,
  $n = |s|$, and let $p = (k+1)(n+1)$. Let
  \[ \ddot{ \Lambda }  \triangleq  \{s'  \in  \mathcal{S}\ddot{ \Sigma }  \mid  |s'| =p+1, | \pi _r(s')|  \leq  (| \pi _l(s')| + 1)k \}.\] By
  applying \Cref{lem:expansion-bounded-output} to $e$, we can write
  \begin{align*}
    e
    & =  \sum _{\substack{s'  \leq  e \\ |s'| < p+1}} s' + %
       \sum _{s'  \in  \ddot{ \Lambda }}s'e_{s'} \\
    & =  \sum _{s'  \leq  e, |s'|  \leq  p} s' + %
       \sum _{s'  \in  \ddot{ \Lambda }}s'e_{s'} \\
    & =  \sum _{\substack{s'  \leq  e \\ |s'|  \leq  p \\  \pi _l(s') = s}} s' + %
       \sum _{\substack{s'  \leq  e \\ |s'|  \leq  p \\  \pi _l(s')  \neq  s}} s' + %
       \sum _{s'  \in  \ddot{ \Lambda }} s' e_{s'},
  \end{align*}
  Thus, to prove the inequality, it suffices to prove
  \begin{align}
    s_r \sum _{\substack{s' \leq e \\ |s'|  \leq  p \\  \pi _l(s') = s}} s'
    & = s \Next_e(s)_r
      \label{eq:representable-goal-1} \\
    s_r \sum _{\substack{s' \leq e \\ |s'|  \leq  p \\  \pi _l(s')  \neq  s}} s'
    &  \leq   \Sigma ^* \Sigma ^{ \neq } \rho 
      \label{eq:representable-goal-2} \\
    s_r \sum _{s'  \in  \ddot{ \Lambda }}s'e_s'
    &  \leq   \Sigma ^* \Sigma ^{ \neq } \rho 
      \label{eq:representable-goal-3}.
  \end{align}

  Let us start with \labelcref{eq:representable-goal-1}.  Notice that, for any
  string $s'$ over $\ddot{ \Sigma }$, we have $s' =  \pi _l(s')_l \pi _r(s')_r$.  Therefore,
  there is a bijection between the set of indices $s'$ of the sum and the set of
  strings $\Next_e(s)$.  The bijection is given by
  \begin{align*}
    s' &  \mapsto   \pi _r(s')  \in  \Next_e(s) \\
    \Next_e(s)  \ni  s' &  \mapsto  s_ls'_r.
  \end{align*}
  To prove that this is a bijection, we must show that the inverse produces
  indeed a valid index.  Notice that, if $s'  \in  \Next_e(s)$, by
  \Cref{lem:step-finite}, we have $|s'|  \leq  (n+1)k$, and thus
  $|s_ls'_r| = |s|+|s'|  \leq  (n+1)(k+1) = p$.

  By reindexing the sum in \labelcref{eq:representable-goal-1} with this
  bijection, we have
  \begin{align*}
    s_r \sum _{\substack{s' \leq e \\ |s'|  \leq  p \\  \pi _l(s') = s}} s'
    & = s_r \sum _{s'  \in  \Next_e(s)} s_ls'_r \\
    & = s_rs_l \sum _{s'  \in  \Next_e(s)} s'_r \\
    & = s_rs_l\left( \sum _{s'  \in  \Next_e(s)} s'\right)_r \\
    & = s \Next_e(s)_r.
  \end{align*}

  Next, let us look at \labelcref{eq:representable-goal-2}.  Suppose that $s'$
  is such that $s'  \leq  e$ and $ \pi _l(s')  \neq  s$.  Since $L$ is prefix free, and
  $ \pi _l(s')  \leq  L$, \Cref{lem:mismatch} applied to $s$ and $s'$ yields
  \[ s_ls'  \leq   \Sigma ^* \Sigma ^{ \neq }\ddot{ \Sigma }^*  \leq   \Sigma ^* \Sigma ^{ \neq }\ddot{ \Sigma }^* \rho _e =  \Sigma ^* \Sigma ^{ \neq } \rho , \] where we
  use the fact that $ \rho _e  \geq  1$ because 1 is a state of the automaton of $e$.
  Summing over all such $s'$, we get the desired inequality.

  To conclude, we must show \labelcref{eq:representable-goal-3}.  By
  distributivity, this is equivalent to showing that, for every $s'  \in   \Lambda $,
  \begin{align*}
    s_r s' e_{s'}
    &  \leq   \Sigma ^* \Sigma ^{ \neq } \rho  \label{eq:representable-goal-3}.
  \end{align*}
  If $e_{s'} = 0$, we are done. Otherwise, by
  \Cref{thm:emptiness-characterization}, we can find some string $s''  \leq  e_{s'}$.
  We have $s's''  \leq  s'e_{s'}  \leq  e$.

  Note that we must have $| \pi _l(s')| > n$.  Indeed, suppose that $| \pi _l(s')|  \leq  n$.
  Since $s'  \in   \Lambda $, we have
  \begin{align*}
    |s'|
    & = | \pi _l(s')|+| \pi _r(s')| \\
    &  \leq  | \pi _l(s')|+(| \pi _l(s')|+1)k \\
    &  \leq  (| \pi _l(s')|+1)(k+1) \\
    &  \leq  (n+1)(k+1) \\
    & < p+1 \\
    & = |s'|,
  \end{align*}
  which is a contradiction.

  Since $ \pi _l(s's'')  \leq   \pi _l(e)  \leq  L$ and $L$ is prefix free, by
  \Cref{lem:mismatch}, we can write $s = s_0xs_1$ and
  $ \pi _l(s's'') =  \pi _l(s') \pi _l(s'') = s_0x's_1'$, with $x  \neq  x'$.  But
  $| \pi _l(s')| > n = |s|$ and $|s_0| < |s|$, thus $ \pi _l(s')$ must be of the form
  $s_0x's_2'$.  We find that $s_rs' = s_r \pi _l(s') \pi _r(s')  \leq   \Sigma ^* \Sigma ^{ \neq }\ddot{ \Sigma }^*$,
  and thus
  \[ s_rs'e_{s'}  \leq   \Sigma ^* \Sigma ^{ \neq }\ddot{ \Sigma }^*e_{s'}  \leq   \Sigma ^* \Sigma ^{ \neq }\ddot{ \Sigma }^* \rho _e =
     \Sigma ^* \Sigma ^{ \neq } \rho . \]
\end{proofEnd}

\subsection{Putting Everything Together}
\label{sec:finalizing}

To derive completeness for two-counter machines (\Cref{thm:completeness}), it
suffices to show that the hypotheses of \Cref{lem:representable-sufficient} are
satisfied.

\begin{theoremEnd}[normal]{lemma}
  \label{lem:transition-is-representable}
  We have the following properties:
  \begin{itemize}
  \item $T_M$ is prefix free.
  \item $ \pi _l(R_M)  \leq  C_M  \leq  T_M$.
  \item $ \pi _r(R_M)  \leq  T_M$.
  \item $R_M$ is finite state (\Cref{def:finite-state}).
  \item $R_M$ has bounded output (\Cref{def:bounded-output}).
  \end{itemize}
  Thus, by \Cref{lem:representable-sufficient}, the term $R_M$ is a
  representable relation of type $\Rel(T_M)$.
\end{theoremEnd}

\begin{proofEnd} %
\iffull%
  To show that $T_M$ is prefix free, we note that every string $s  \leq  T_M$ must be
  of the form $s'x$, where $x  \in  Q_M  \cup  \{c_1,c_0\}$ and $s'$ does not contain any
  such symbols.  Thus, any proper prefix of such a string cannot lie in $T_M$.

  The assertions about $ \pi _l(R_M)$ and $ \pi _r(R_M)$ have similar proofs, so we
  focus on the second one.  First, we prove that, for any instruction $i  \in  I_M$,
  $ \pi _r( \llbracket i \rrbracket )  \leq  T_M$.  Let us consider the example of $\Inc$; the others are
  analogous:
  \begin{align*}
     \pi _r(\Inc(1,q))
    & =  \pi _r(a_ra^*b^*q_r) \\
    & = aa^*b^*q \\
    &  \leq  a^*b^*q
    & \text{because $aa^*  \leq  a^*$} \\
    &  \leq  C_M \\
    &  \leq  T_M.
  \end{align*}
  Thus,
  \begin{align*}
     \pi _r(R_M)
    & =  \sum _{q  \in  Q_M} \pi _r( \llbracket  \iota (q) \rrbracket q_l) \\
    & =  \sum _{q  \in  Q_M} \pi _r( \llbracket  \iota (q) \rrbracket ) \\
    &  \leq   \sum _{q  \in  Q_M}T_M \\
    & = T_M.
  \end{align*}

  To show the next two points, we just have to appeal to the closure properties
  of finite-state and bounded-output terms
  (\Cref{lem:finite-state-closure,lem:bounded-output-closure}).  These lemmas
  say that these properties are always preserved by all the algebra operations,
  except possibly for star.  For star, we need to check that the starred
  sub-terms do not contain 1 and that they only contain strings with at least
  one left symbol.  The starred sub-terms are just $a_la_r$ and $b_lb_r$, both
  of which satisfy this property.

\else%
  To show that $R_M$ is finite state and had bounded output, we just appeal to
  the closure properties of such terms
  \Cref{lem:finite-state-closure,lem:bounded-output-closure}.  The rest is
  routine.
\fi
\end{proofEnd}

We can finally conclude with the proof of completeness, thus establishing
undecidability (\Cref{thm:undecidability}).

\begin{proof}[Proof of \Cref{thm:completeness}]
  If $s = s_0  \to _{R_M}  \cdots   \to _{R_M} s_n = c_1$, we can show that $\Next_e^i(s)$ is
  $\{s_i\}$ for $i  \leq  n$ and $ \emptyset $ when $i > n$, because the transition relation is
  deterministic and because $c_1$ does not transition.  Moreover, by
  \Cref{lem:transition-is-representable}, we have $s_i  \leq  C_M$ for every $i < n$
  (since $(s_i)_l(s_{i+1})_r  \leq  R_M$).

  Choose $ \rho $ as in \Cref{thm:representable-star-finite}. We have
  \begin{align*}
    sR_M^*
    &  \leq   \Sigma ^*\Next_e^{<n+1}(s)_r +  \Sigma ^* \Sigma ^{ \neq } \rho  \\
    & =  \Sigma ^*(\Next_e^{<n}(s) + \Next_e^n(s))_r +  \Sigma ^* \Sigma ^{ \neq } \rho  \\
    &  \leq   \Sigma ^*(C_M + c_1)_r +  \Sigma ^* \Sigma ^{ \neq } \rho . \qedhere
  \end{align*}
\end{proof}

\section{Conclusion and Related Work}
\label{sec:conclusion}

In his seminal work, Kozen~\cite{DBLP:conf/lics/Kozen97} established several
hardness and completeness results for variants of Kleene algebra.  He noted that
deciding equality in $*$-continuous Kleene algebras with commutativity
conditions on primitives was not possible---more precisely, the problem is
$ \Pi _1^0$-complete, by reduction from the complement of the Post
correspondence problem (PCP).  However, at the time, it was unknown whether a
similar result applied to the pure theory of Kleene algebra with commutativity
conditions ($\mathcal{K}X$).  The question had been left open since then. Our
work provides a solution, proving that the problem is undecidable, even for a
much weaker theory $\mathcal{T}X$, which omits the induction axioms of Kleene
algebra.

As we were about to post publicly this work, we became aware of the work of
Kuznetsov~\cite{Kuznetsov23}, who independently proved a similar result.  There
are two main differences between our results and his.  Originally, our proof
only established the undecidability of the theory of Kleene algebra with
commutativity conditions, whereas Kuznetsov's work proved its
$\Sigma^0_1$-completeness as well by leveraging the notion of \emph{effective
  inseparability}.  Since learning about his work, we managed to adapt his ideas
to our setting, thus obtaining completeness as well.  On the other hand,
Kuznetsov's proof requires the induction axiom of Kleene algebra to simplify
some of the inequalities involving starred terms---specifically, he needs the
identity $A^*(A^*)^+ \leq A^*$ and the monotonicity of $(-)^*$, whereas our
proof also applies to the weaker theory of pre-Kleene algebra.  In this sense,
we can view the results reported here as a synthesis of Kuznetsov's work and
ours.

In terms of techniques, both of our works draw inspiration from the proof of
$ \Pi _1^0$-completeness of the equational theory of $*$-continuous
KA. Leveraging the reduction of the halting problem to the PCP, Kuznetsov used
Kleene-algebra inequalities to describe \emph{self-looping} Turing
machines---that is, Turing machines that run forever by reaching a designated
configuration that steps to itself.  He then showed that the set of machine-input pairs
$ \langle M ,x \rangle $ where machines $M$ that reach a self-looping state on
input $x$ is recursively inseparable from the set of such pairs where $M$ halts
on the input, which implies that such inequalities cannot decidable.

The inequalities used by Kuznetsov are similar to ours, and can be proved by
unfolding finitely many times the starred term that defines the execution of
Turing machines, and by applying standard Kleene algebra inequalities that
follow from induction.  One important difference is that, in Kuznetsov's work,
this starred term contains only $*$-free terms, which arise from the reduction
of the halting problem to the PCP.  This requires some more work to establish
that the inequality indeed encodes the execution of the Turing machine, but this
work just replicates the ideas behind the standard reduction from the halting
problem to the PCP, so it does not need to be belabored. On the other hand, we
leverage the language of Kleene algebra to define an execution model for
two-counter machines, which can be encoded more easily.  The downside of our
approach is that our relation $R_M$ involves starred terms, which require our
notion of bounded output to be analyzed effectively.

\bibliographystyle{plainurl}
\bibliography{refs.bib}

\ifappendix

\appendix

\section{Addenda on Kleene Algebra}

\begin{theoremEnd}{theorem}
  \label{thm:language-completeness}
  Let $X  \in  \Comm$ be discrete and $Y  \in  \Comm$ be commutative.
  \begin{align*}
    l_X : \mathcal{K}X &  \to  \mathcal{L}X \\
    l_Y : \mathcal{K}Y &  \to  \mathcal{L}Y
  \end{align*}
  are isomorphisms.
\end{theoremEnd}

\begin{proofEnd}
  These are standard results of Kleene algebra.
\end{proofEnd}

\begin{theoremEnd}{lemma}
  \label{lem:ideal-subalgebra}
  Let $x$ be an element of a Kleene algebra $X$ such that
  \begin{align*}
    1 &  \leq  x \\
    xx &  \leq  x.
  \end{align*}
  Then the set $X_x  \triangleq  \{ y  \in  X  \mid  y  \leq  x \}$ is a subalgebra of $X$.
\end{theoremEnd}

\begin{proofEnd}
  Since all Kleene algebra operations are monotonic, they automatically preserve
  $X_x$.  For example, if $y  \leq  x$ and $z  \leq  x$, then $yz  \leq  xx  \leq  x$, so
  $yz  \in  X_x$.  Furthermore, notice that $x^*  \leq  x$ by induction.  Therefore,
  $y^*  \leq  x^*  \leq  x$, so $y^*  \in  X_x$.
\end{proofEnd}

As a subalgebra, when $Y$ is finite, $\mathcal{K}Y$ is canonically isomorphic to
$(\mathcal{K}X)_{Y^*}$ (cf. \Cref{lem:ideal-subalgebra}).

\begin{theoremEnd}{lemma}
  \label{lem:matrix-ka}
  Let $X$ be a Kleene algebra.  The set $M_{n,n}(X)$ of $n  \times  n$ matrices with
  coefficients in $X$ is a Kleene algebra when endowed with matrix addition and
  multiplication.  The star of a block matrix is given by the formula
  \begin{align*}
    \begin{bmatrix}
      A & B \\
      C & D
    \end{bmatrix}^*
    & =
      \begin{bmatrix}
        F^* & F^*BD^* \\
        D^*CF^* & D^* + D^*CF^*BD^*
      \end{bmatrix},
  \end{align*}
  where $F = A + BD^*C$.  In particular, if $C = 0$ (that is, if the matrix is
  block upper triangular), then $F = A$, and thus
  \begin{align*}
    \begin{bmatrix}
      A & B \\
      0 & D
    \end{bmatrix}^*
    & =
      \begin{bmatrix}
        A^* & A^*BD^* \\
        0 & D^*
      \end{bmatrix}.
  \end{align*}
\end{theoremEnd}

\begin{theoremEnd}{theorem}
  \label{thm:free-ka-comm-sum}
  Let $Y_{1}$ and $Y_{2}$ be finite commutable sets.  Given $a_{1}  \in  \mathcal{K}Y_{1}$ and
  $a_{2}  \in  \mathcal{K}Y_{2}$, we have $a_{1}a_{2} = a_{2}a_{1}$ in $\mathcal{K}(Y_{1} \oplus Y_{2})$.  In other
  words, we have the following factorization:
  \begin{center}
    \begin{tikzcd}
      \mathcal{K}Y_{1} \ar[rd, hook] \ar[rrd, hook, bend left] & & \\
      & \mathcal{K}Y_{1} \oplus \mathcal{K}Y_{2} \ar[r, dashed] & \mathcal{K}(Y_{1} \oplus Y_{2}) \\
      \mathcal{K}Y_{2} \ar[ur, hook] \ar[urr, hook, bend right] & &
    \end{tikzcd}
  \end{center}
\end{theoremEnd}

\begin{proofEnd}
  By induction on $a_{1}$.

  If $a_{1}  \in  Y_{1}$, we proceed by induction on $a_{2}$.  If $a_{2}  \in  Y_{2}$, then
  $a_{1}a_{2} = a_{2}a_{1}$ in $\mathcal{K}(Y_{1} \oplus Y_{2})$ because $a_{1}  \sim  a_{2}$ in $Y_{1} \oplus Y_{2}$.  If
  $a_{2}  \in  \{0,1\}$, the result is trivial.  If $a_{2} = a_{21} + a_{22}$, we reason
  $a_{1}(a_{21} + a_{22}) = a_{1}a_{21} + a_{1}a_{22} = a_{21}a_{1} + a_{22}a_{1} =
  (a_{21}+a_{22})a_{1}$.  If $a_{2}=a_{21}a_{22}$, we reason
  $a_1a_{21}a_{22} = a_{21}a_1a_{22} = a_{21}a_{22}a_1$.  Finally, if
  $a_2=b_2^*$, we use the Kleene algebra theorem $xy=zx  \Rightarrow  xy^* = z^*x$.

  The other cases for $a_{1}$ follow a similar pattern.
\end{proofEnd}

\begin{theoremEnd}{theorem}
  \label{thm:language-pullback}
  Let $Y  \subseteq  X$ be a commutable subset.  We have the following pullback square:
  \begin{center}
    \begin{tikzcd}
      \mathcal{K}Y \ar[d, hook] \ar[r, "l"] & \mathcal{L}Y \ar[d,hook] \\
      \mathcal{K}X \ar[r, "l"] & \mathcal{L}X.
    \end{tikzcd}
  \end{center}
  In other words, if $e  \in  \mathcal{K}X$ is such that $l(e)  \subseteq  Y^*$, then $e$ must
  be in the image of $\mathcal{K}Y$ in $\mathcal{K}X$.
\end{theoremEnd}

\begin{proofEnd}
  By induction on $e$.
  \begin{itemize}
  \item If $e = 0$ or $e = 1$, then $e$ is trivially in the image.
  \item Suppose that $e = x  \in  X$.  Since $l(e)  \subseteq  Y^*$, we have $x  \in  Y$, from
    which the result follows.
  \item Suppose that $e = e_{1} + e_{2}$.  Then $l(e_i)  \subseteq  Y^*$ for every $i$.  By the
    induction hypotheses, this means that, for every $i$, $e_i$ is in the image
    of $\mathcal{K}Y$.  Thus, so is $e_{1} + e_{2}$.
  \item Suppose that $e = e_{1}e_{2}$.  Let us begin by showing that $l(e_1)  \subseteq  Y^*$.
    We can assume that $l(e_{2})$ is nonempty; otherwise, the result follows
    trivially, because \Cref{thm:finite-term} implies $e_2 = 0$ and thus
    $e = 0$, which is in the image of $\mathcal{K}Y$.  Thus, we have some
    $s_2  \in  l(e_2)$.  Now, consider some arbitrary $s_1  \in  l(e_1)$.  Since
    $s_1s_2  \in  l(e_1e_2)  \subseteq  Y^*$ , we conclude that $s_1$ can only have symbols in
    $Y$.  This implies that $l(e_1)$.

    By a symmetric reasoning, we can show that $l(e_2)  \subseteq  Y^*$.  Thus, our
    induction hypotheses show that both $e_1$ and $e_2$ are in the image of
    $\mathcal{K}Y$, and so is $e_1e_2$.
  \item Finally, suppose that $e = e_{1}^*$.  We have $l(e_{1})  \subseteq  l(e_{1}^*)  \subseteq  Y^*$.
    Thus, our induction hypothesis says that $e_1$ is in the image of
    $\mathcal{K}Y$, implying that so is $e$.
  \end{itemize}
\end{proofEnd}

\begin{theoremEnd}{lemma}
  \label{lem:bounded-output-derivative}
  If $e  \in  \mathcal{T}(X  \oplus  Y)$ has bounded output and is derivable, then all of
  its derivatives have bounded output.
\end{theoremEnd}

\begin{proofEnd}
  Suppose that $x  \in  X$ and $y  \in  Y$. We have to show that both $ \delta _x(e)$ and
  $ \delta _y(e)$ have bounded output.  Let $k$ be the fanout of $e$.

  Suppose that $s  \leq   \delta _x(e)$.  Thus, $xs  \leq  x \delta _x(e)  \leq  e$, which implies
  \begin{align*}
    | \pi _r(xs)|
    &  \leq  (| \pi _l(xs)|+1)k \\
    & = (| \pi _l(s)|+2)k \\
    &  \leq  (| \pi _l(s)|+1)2k.
  \end{align*}
  Since $ \pi _r(xs) =  \pi _r(s)$, we conclude that $ \delta _x(e)$ has fanout $2k$.

  Now, suppose that $s  \leq   \delta _y(e)$.  Thus, $ys  \leq  y \delta _y(e)  \leq  e$, which implies
  \begin{align*}
    | \pi _r(ys)|
    &  \leq  (| \pi _l(ys)|+1)k \\
    &  \leq  (| \pi _l(s)|+1)k.
  \end{align*}
  Since $| \pi _r(s)|  \leq  1 + | \pi _r(s)| = | \pi _r(ys)|$, we conclude that $ \delta _y(e)$ has
  fanout $k$ as well.
\end{proofEnd}

\begin{theoremEnd}{lemma}
  \label{lem:fanout-monotonicity}
  If $e$ has fanout $k$ and $k'  \geq  k$, then $e$ has fanout $k$.
\end{theoremEnd}

\begin{theoremEnd}{corollary}
  \label{cor:bounded-output-finite-state}
  Suppose that $e$ is bounded output and finite state with an automaton $S$.
  Then we can find some $k  \in  \mathbb{N}$ and a finite-state automaton $S'  \subseteq  S$ that
  contains $e$ and whose states all have fanout $k$.
\end{theoremEnd}

\begin{proofEnd}
  Let $S$ be some finite-state automaton that contains $e$.  Let $S'$ be the
  finite state automaton of states reachable from $\{e\}$
  (cf. \Cref{lem:automaton-of-reachable-states}).  We just have to show that all
  states of $S'$ have bounded output: by \Cref{lem:fanout-monotonicity}, we can
  take $k$ to be the largest fanout of its states.

  Because $1$ has bounded output and because bounded-output terms are closed
  under sums (\Cref{lem:bounded-output-closure}), it suffices to show that the
  pre-automaton of states reachable from $\{e\}$ only contains bounded-output
  states.  We do this by induction using \Cref{lem:bounded-output-derivative}
  and the hypothesis that $e$ has bounded output.
\end{proofEnd}

\begin{theoremEnd}{lemma}
  \label{lem:next-nonempty-characterization}
  If $e : \Rel(L)$ is representable and $\Next_e(s)  \neq  0$, then $s  \leq  L$.
\end{theoremEnd}

\begin{proofEnd}
  If $\Next_e(s)  \neq  0$, we can find $s'  \leq  \Next_e(s)$.  By definition,
  $s_ls'_r  \leq  e$.  Therefore, $s =  \pi _l(s_ls'_r)  \leq   \pi _l(e)  \leq  L$.
\end{proofEnd}

\begin{theoremEnd}{lemma}
  \label{lem:prefix-free-inversion}
  Let $s$ and $s'$ be strings over $ \Sigma $, $p$ be a string over $\ddot{ \Sigma }$, and $e$
  be a term over $\ddot{ \Sigma }$.  Suppose that $s$ and $s'$ belong to some prefix
  free $L$ and that $sp  \leq  s'e$. Then $s = s'$ and $p  \leq  e$.
\end{theoremEnd}

\begin{proofEnd}
  By interpreting the inequality over languages, we see that $sp$ must be of the
  form $s'p'$ for some $p'  \leq  e$.  Thus,
  $s \pi _l(p) =  \pi _l(sp) =  \pi _l(s'p') = s' \pi _l(p')$.  This is an equality between
  strings over a discrete alphabet, so either $s$ is a prefix of $s'$, or the
  other way around.  But $L$ is prefix free, so we must have $s = s'$.
  Moreover, string concatenation is a cancellative operation, so we must have
  $ \pi _l(p) =  \pi _l(p')$.  An analogous reasoning shows that $ \pi _r(p) =  \pi _r(p')$.  We
  conclude by noting that $p =  \pi _l(p)_l \pi _r(p)_r =  \pi _l(p')_l \pi _r(p')_r = p'$.
\end{proofEnd}

\begin{theoremEnd}{lemma}
  \label{lem:automaton-of-reachable-states}
  Let $S$ be an automaton and $S_0  \subseteq  S$ be a subset of states.  Let $S'$ be the
  \emph{set of states reachable from $S_0$}, which is the smallest set
  satisfying
  \begin{itemize}
  \item $S_0  \subseteq  S'$;
  \item If $e  \in  S'$ and $ \delta _x(e)  \in  S$ is some derivative, then $ \delta _x(e)  \in  S'$.
  \end{itemize}
  The set $S'$ is a pre-automaton.  We refer to its generated automaton as the
  automaton of states reachable from $S_0$.
\end{theoremEnd}

\section{Decomposition}
\label{sec:decomposition}

Let $X$ be a finite commutable set and $Y$ and $Z$ be two disjoint commutable
subsets of $X$.  We are going to show that every term of $TX$ can be written
uniquely as a sum of two components: one that only contains symbols from $Y$,
and another one that contains at least one symbol from $Z$. Consider the
following matrix:
\begin{align*}
  U_{X,Y} &  \triangleq 
      \begin{bmatrix}
        Y^* & Y^*ZX^* \\
        0 & X^*
      \end{bmatrix}.
\end{align*}
We can check $1  \leq  U_{X,Y}$ and $U_{X,Y}U_{X,Y}  \leq  U_{X,Y}$, hence we can consider
the ideal Kleene algebra $M_{2,2}(\mathcal{K}X)_{U_{X,Y}}$.

\begin{theoremEnd}{lemma}
  Let
  \[ D_{X,Y}  \triangleq  \left\{
      \begin{bmatrix}
        a & b \\
        0 & a + b \\
      \end{bmatrix}  \mid  a  \leq  Y^*, b  \leq  Y^*ZX^* \right\}. \]
  This is a Kleene subalgebra of $M_{2,2}(\mathcal{K}X)_{U_{X,Y}}$.
\end{theoremEnd}

\begin{proofEnd}
  It suffices to check closure for all the Kleene algebra operations.
  We focus on two interesting cases.  Let
  \begin{align*}
    x_1 & =
          \begin{bmatrix}
            a_1 & b_1 \\
            0 & a_1 + b_1
          \end{bmatrix} \\
    x_2 & =
          \begin{bmatrix}
            a_2 & b_2 \\
            0 & a_2 + b_2
          \end{bmatrix}.
  \end{align*}
  Then
  \begin{align*}
    x_1x_2
    & =
      \begin{bmatrix}
        a_1a_2 & a_1b_2 + b_1 (a_2+b_2) \\
        0 & a_1a_2 + (a_1b_2 + b_1(a_2+b_2))
      \end{bmatrix},
  \end{align*}
  which is of the desired form.

  Otherwise, suppose
  \begin{align*}
    x
    & =
      \begin{bmatrix}
        a & b \\
        0 & a + b
      \end{bmatrix},
  \end{align*}
  then
  \begin{align*}
    x^* & =
          \begin{bmatrix}
            a^* & a^*b(a+b)^* \\
            0 & (a+b)^*
          \end{bmatrix}.
  \end{align*}
  Since $a^* + a^*b(a+b)^* = (a+b)^*$ by a standard theorem of Kleene algebra,
  we are done.
\end{proofEnd}

\begin{theoremEnd}{lemma}
  \label{lem:decomposition}
  Define a morphism of type $i : \mathcal{K}X  \to  D_{X,Y}$ by lifting
  \begin{align*}
    c &  \mapsto  \begin{cases}
            \begin{bmatrix}
              c & 0 \\
              0 & c
            \end{bmatrix}
            & \text{if $c  \in  Y$} \\
            \begin{bmatrix}
              0 & c \\
              0 & c
            \end{bmatrix} &
            \text{otherwise}.
          \end{cases}
  \end{align*}
  This morphism has a left inverse $j$ given by
  \begin{align*}
    j
    \begin{bmatrix}
      a & b \\
      0 & a + b
    \end{bmatrix}
    & = a + b.
  \end{align*}
\end{theoremEnd}

\begin{theoremEnd}{corollary}[Existence of decomposition]
  Every $x  \in  \mathcal{K}X$ must be of the form $a + b$ for some $a  \leq  Y^*$ and some
  $b  \leq  Y^*ZX^*$.
\end{theoremEnd}

\begin{proofEnd}
  By \Cref{lem:decomposition}, $x = j(i(x))$, which must be of this form.
\end{proofEnd}

We are now going to show that the decomposition is unique.

\begin{theoremEnd}{lemma}
  \label{lem:decomposition-orthogonal}
  Suppose $a  \leq  Y^*$ and $b  \leq  Y^*ZX^*$. Then
  \begin{align*}
    i(a)
    &  \triangleq 
      \begin{bmatrix}
        a & 0 \\
        0 & a
      \end{bmatrix} &
    i(b)
    &  \triangleq 
      \begin{bmatrix}
        0 & b \\
        0 & b
      \end{bmatrix}.
  \end{align*}
\end{theoremEnd}

\begin{proofEnd}
  Notice that
  \begin{align*}
    i(Y^*)
    &  \triangleq 
      \begin{bmatrix}
        Y^* & 0 \\
        0 & Y^*
      \end{bmatrix}
    & i(Y^*ZX^*)
    &  \triangleq 
      \begin{bmatrix}
        0 & Y^*ZX^* \\
        0 & Y^*ZX^*
      \end{bmatrix}.
  \end{align*}
  Since $i(a)  \leq  i(Y^*)$ and $i(b)  \leq  i(Y^*ZX^*)$, these matrices must be of the
  form
  \begin{align*}
    i(a)
    &  \triangleq 
      \begin{bmatrix}
        a' & 0  \\
        0 & a'
      \end{bmatrix}
    & i(b)
    &  \triangleq 
      \begin{bmatrix}
        0 & b' \\
        0 & b'
      \end{bmatrix}.
  \end{align*}
  But by \Cref{lem:decomposition}, we must have $a' = a$ and $b' = b$, from
  which the result follows.
\end{proofEnd}

\begin{theoremEnd}{corollary}[Decomposition is unique]
  Suppose that $a_1 + b_1 = a_2 + b_2$, when $a_i  \leq  Y^*$ and $b_i  \leq  Y^*ZX^*$.
  Then $a_1 = a_2$ and $b_1 = b_2$.
\end{theoremEnd}

\begin{proofEnd}
  By \Cref{lem:decomposition-orthogonal},
  \begin{align*}
    i(a_i + b_i)
    & = i(a_i) + i(b_i) \\
    & =
      \begin{bmatrix}
        a_i & 0 \\
        0 & a_i
      \end{bmatrix}
      +
      \begin{bmatrix}
        0 & b_i \\
        0 & b_i
      \end{bmatrix}\\
    & =
      \begin{bmatrix}
        a_i & b_i \\
        0 & a_i+b_i
      \end{bmatrix}.
  \end{align*}
  Since $i(a_1 + b_1) = i(a_2+b_2)$, we must have $a_1 = a_2$ and $b_1 = b_2$.
\end{proofEnd}

\begin{theoremEnd}{corollary}
  $i : \mathcal{K}X  \cong  D_{X,Y}$.
\end{theoremEnd}

\begin{proofEnd}
  By the previous results, $i(j(x))$ must be equal to $x$, so $i$ and $j$ are
  two-sided inverses of each other.
\end{proofEnd}

With this decomposition, we can define and compute the empty word property of a word.
We can instantiate \(Y =  \emptyset \), and \(Z = X\), in this case:
\[i(e) = \begin{bmatrix}
  a & b \\
  0 & a + b
\end{bmatrix},\]
and \(a  \leq  Y^*\), and \(b  \leq  X X^*\).
Since the projection of first diagonal element \( \pi _{1, 1}\) is a homomorphism,
\( \pi _{1, 1}  \circ  i: F X  \to  F X\) is also a homomorphism,
and thus the range of \( \pi _{1, 1}  \circ  i\) forms a KA.
Since the element in this range is smaller than \(Y^*\),
the range is a sub-KA of \(F Y\),
also because \(Y =  \emptyset \) is the initial object in the category of commutable set,
thus \(F Y\) is also the initial element in KA, which is \(2\),
and such a KA contains no proper sub-KA.
Therefore, the range of \( \pi _{1, 1}  \circ  i\) is \(2\),
and we will denote the homomorphism \( \pi _{1,1}  \circ  i\)
restricted to its range as \(E: F X  \to  2\).

TODO: prove term under \(Y^*\) is isomorphic to \(F Y\).

\begin{theoremEnd}{corollary}[Completeness of \(E\)]
  For all term \(e  \in  F X\):
  \[E(e) = 1  \iff   \epsilon   \in  l(e)  \iff  e  \geq  1.\]
\end{theoremEnd}

\begin{proofEnd}
  Recall that \(e\) can be decomposed into \(E(e) + b\), where \(b  \in  X X^*\).
  We consider the language interpretation of \(e\):
  \[l(e) = l(E(e)) + l(b).\]
  Notice that \(b  \leq  X X^*\), hence \( \epsilon   \notin  l(b)\).
  And because \(\{ \epsilon \}  \nsubseteq  l(b)\), therefore \(1  \nleq  b\).

  We first show \(E(e) = 1  \iff   \epsilon   \in  l(e)\).
  If \(E(e) = 1\), then \( \epsilon   \in  l(E(e))\), and
  \[ \epsilon   \in  l(E(e))  \subseteq  l(E(e)) + l(b) = l(e).\]
  If \(E(e)  \neq  1\), i.e. \(E(e) = 0\), then \( \epsilon   \notin  l(E(e))\).
  Because \( \epsilon   \notin  l(E(e))\) and \( \epsilon   \notin  l(b)\), therefore
  \[ \epsilon   \notin  l(E(e)) + l(b) = l(e).\]

  We then show \(E(e) = 1  \iff  e  \geq  1\).
  If \(E(e) = 1\), then
  \[e = E(e) + b  \geq  1.\]
  If \(E(e)  \neq  1\), then \(E(e) = 0\),
  \[e = E(e) + b = b  \ngeq  1.\]
\end{proofEnd}

\begin{theoremEnd}{corollary}[Decidability of \(E\)]
  For all term \(e  \in  F X\), \(E(e) = 1\) is decidable.
\end{theoremEnd}

\begin{proofEnd}
  Because the KA \(2\) is decidable,
  that is every expression in \(2\) can be reduced to either \(0\) or \(1\) in finite time.
  Therefore, we can simply compute the result of \(E(e)\) following the structure of \(e\),
  and then the result of \(E(e)\) to see if it equals to \(1\).
\end{proofEnd}

TODO: I don't think the fact that "\(i\) is an isomorphism" is necessary?

\begin{theoremEnd}{corollary}
  \label{cor:unit-membership-decidability}
  Given any $x  \in  \mathcal{K}X$, it is decidable whether $1  \leq  x$.
\end{theoremEnd}

\begin{proofEnd}
  Let's instantiate the previous results with $Y =  \emptyset $ and $Z = X$.  Since $i :
  \mathcal{K}X  \cong  D_{X,Y}$, $1  \leq  x$ is equivalent to
  \begin{align*}
    \begin{bmatrix}
      1 & 0 \\
      0 & 1
    \end{bmatrix}
    &  \leq  i(x).
  \end{align*}
  We can determine this by checking that $1 \leq i(x)_{1,1}$.  Since
  $ \emptyset $ is the initial object of $\Comm$, $\mathcal{K} \emptyset $ must
  also be initial in $\KA$, and thus isomorphic to $\{0 \leq 1\}$, which is a
  decidable Kleene Algebra.  So, we just have to see if $i(x)_{1,1}$ is equal to
  1 via the canonical isomorphism coming from initiality.
\end{proofEnd}

\section{Derivative}
\label{sec:derivative}

\subsection{Term Derivative}

Let $X$ be a finite commutable set. We prove that the derivative of a term
$a  \in  \mathcal{K}X$ is well-defined with respect to any $x  \in  X$.  Define the following
commutable subsets of $X$:
\begin{align*}
  Y &  \triangleq  \{ y  \sim  x  \mid  y  \neq  x \} \\
  Z &  \triangleq  \{ z \mathrel{\not \sim } x  \mid  z  \neq  x \}.
\end{align*}

Define the following matrix
\begin{align*}
  U_x &  \triangleq 
        \begin{bmatrix}
          Y^* & Y^*ZX^* & X^* \\
          0   & X^* & 0 \\
          0   & 0   & X^*
        \end{bmatrix}.
\end{align*}
Note that $1  \leq  U_x$ and $U_xU_x  \leq  U_x$.  Thus, we can consider the ideal Kleene
algebra $M_{3,3}(\mathcal{K}X)_{U_x}$.

\begin{theoremEnd}{lemma}
  The set $D_x  \subseteq  M_{3,3}(\mathcal{K}X)_{U_x}$ defined as
  \begin{align*}
    D_x
    &  \triangleq 
      \left\{
      \begin{bmatrix}
        a & b & c \\
        0 & d & 0 \\
        0 & 0 & d
      \end{bmatrix}
       \mid  d = a + b + xc \right\}
  \end{align*}
  is a Kleene subalgebra.
\end{theoremEnd}

\begin{proofEnd}
  We just need to check closure for the Kleene algebra operations. We focus on
  two cases: multiplication and star.

  Suppose that we have $m_1$ and $m_2$ with
  \begin{align*}
    m_i & =
             \begin{bmatrix}
               a_i & b_i & d_i \\
               0 & d_i & 0 \\
               0 & 0 & d_i
             \end{bmatrix},
  \end{align*}
  where $d_i = a_i + b_i + xc_i$, and similarly for $d_2$.  Hence
  \begin{align*}
    m_1m_2 & =
                \begin{bmatrix}
                  a_1a_2 & a_1b_2 + b_1d_2 & a_1c_2 + c_1d_2 \\
                  0 & d_1d_2 & 0 \\
                  0 & 0 & d_1d_2
                \end{bmatrix}.
  \end{align*}
  We have
  \begin{align*}
    d_1d_2
    & = a_1a_2 + [a_1b_2 + b_1a_2 + b_1b_2 + b_1xc_2]
      + [a_1xc_2 + xc_1a_2 + xc_1b_2 + xc_1xc_2] \\
    & = a_1a_2 + [a_1b_2 + b_1a_2 + b_1b_2 + b_1xc_2]
      + [xa_1c_2 + xc_1a_2 + xc_1b_2 + xc_1c_2] \\
    & = a_1a_2 + [a_1b_2 + b_1a_2 + b_1b_2 + b_1xc_2]
      + x [a_1c_2 + c_1a_2 + c_1b_2 + c_1xc_2] \\
    & = a_1a_2 + [a_1b_2 + b_1d_2] + x[a_1c_2 + c_1d_2],
  \end{align*}
  which is what we seek.

  On the other hand, we have
  \begin{align*}
    m_1^*
    &  \triangleq 
      \begin{bmatrix}
        a_1^* & a_1^*b_1d_1^* & a_1^*c_1d_1^* \\
        0 & d_1^* & 0 \\
        0 & 0 & d_1^*
      \end{bmatrix}.
  \end{align*}
  Since
  \begin{align*}
    d_1^* & = (a_1 + b_1 + xc_1)^*  \\
          & = a_1^* + a_1^*b_1d_1^* + a_1^*xc_1d_1^* \\
          & = a_1^* + a_1^*b_1d_1^* + x(a_1^*c_1d_1^*),
  \end{align*}
  (note that $a_1^*x = xa_1^*$) we are done.
\end{proofEnd}

\begin{theoremEnd}{theorem}
  \label{thm:derivative}
  Define a morphism of type $i : \mathcal{K}X  \to  D_x$ by lifting
  \begin{align*}
     \alpha  &  \mapsto  \begin{cases}
            \begin{bmatrix}
               \alpha  & 0 & 0\\
              0 &  \alpha  & 0 \\
              0 & 0 &  \alpha 
            \end{bmatrix}
            & \text{if $ \alpha   \in  Y$} \\
            \begin{bmatrix}
              0 &  \alpha  & 0  \\
              0 &  \alpha  & 0 \\
              0 & 0 &  \alpha 
            \end{bmatrix} &
            \text{if $ \alpha   \in  Z$} \\
            \begin{bmatrix}
              0 & 0 & 1 \\
              0 & x & 0 \\
              0 & 0 & x
            \end{bmatrix} &
            \text{if $ \alpha  = x$}
          \end{cases}
  \end{align*}
  This morphism has a left inverse $j$ given by
  \begin{align*}
    j
    \begin{bmatrix}
      a & b & c\\
      0 & d & 0 \\
      0 & 0 & d
    \end{bmatrix}
    & = d.
  \end{align*}
\end{theoremEnd}

\begin{theoremEnd}{corollary}
  Every $e  \in  TX$ is of the form $a + b + xc$ for some $a  \leq  Y^*$, $b  \leq  Y^*ZX^*$
  and $c  \leq  X^*$.
\end{theoremEnd}

\begin{theoremEnd}{lemma}
  Let $a  \leq  Y^*$, $b  \leq  Y^*ZX^*$ and $c  \leq  X^*$.  Then
  \begin{align*}
    i(a)
    & =
      \begin{bmatrix}
        a & 0 & 0 \\
        0 & a & 0 \\
        0 & 0 & a
      \end{bmatrix}
    & i(b)
    & =
      \begin{bmatrix}
        0 & b & 0 \\
        0 & b & 0 \\
        0 & 0 & b
      \end{bmatrix}
    & i(xc)
      =
      \begin{bmatrix}
        0 & 0 & c \\
        0 & xc & 0 \\
        0 & 0 & xc
      \end{bmatrix}.
  \end{align*}
\end{theoremEnd}

\begin{proofEnd}
  By linearity,
  \begin{align*}
    i(Y)
    & =
      \begin{bmatrix}
        Y & 0 & 0 \\
        0 & Y & 0 \\
        0 & 0 & Y
      \end{bmatrix}
    & i(Z)
    & =
      \begin{bmatrix}
        0 & Z & 0 \\
        0 & Z & 0 \\
        0 & 0 & Z
      \end{bmatrix}
    & i(X)
    & =
      \begin{bmatrix}
        Y & Z & 1 \\
        0 & X & 0 \\
        0 & 0 & X
      \end{bmatrix}.
  \end{align*}
  Therefore,
  \begin{align*}
    i(Y^*)
    & =
      \begin{bmatrix}
        Y^* & 0 & 0 \\
        0 & Y^* & 0 \\
        0 & 0 & Y^*
      \end{bmatrix}
    \\ i(X^*)
    & =
      \begin{bmatrix}
        Y^* & Y^*ZX^* & Y^*1X^* \\
        0 & X^* & 0 \\
        0 & 0 & X^*
      \end{bmatrix}
    \\ & =
      \begin{bmatrix}
        Y^* & Y^*ZX^* & X^* \\
        0 & X^* & 0 \\
        0 & 0 & X^*
      \end{bmatrix}
    \\ i(Y^*ZX^*)
    & =
      \begin{bmatrix}
        Y^* & 0 & 0 \\
        0 & Y^* & 0 \\
        0 & 0 & Y^*
      \end{bmatrix}
      \begin{bmatrix}
        0 & Z & 0 \\
        0 & Z & 0 \\
        0 & 0 & Z
      \end{bmatrix}
      \begin{bmatrix}
        Y^* & Y^*ZX^* & X^* \\
        0 & X^* & 0 \\
        0 & 0 & X^*
      \end{bmatrix}
    \\
    & =
      \begin{bmatrix}
        0 & Y^*Z & 0 \\
        0 & Y^*Z & 0 \\
        0 & 0 & Y^*Z
      \end{bmatrix}
      \begin{bmatrix}
        Y^* & Y^*ZX^* & X^* \\
        0 & X^* & 0 \\
        0 & 0 & X^*
      \end{bmatrix}
    \\
    & =
      \begin{bmatrix}
        0 & Y^*ZX^* & 0 \\
        0 & Y^*ZX^* & 0 \\
        0 & 0 & Y^*ZX^*
      \end{bmatrix}
    \\ i(xX^*)
    & =
      \begin{bmatrix}
        0 & 0 & 1 \\
        0 & x & 0 \\
        0 & 0 & x
      \end{bmatrix}
      \begin{bmatrix}
        Y^* & Y^*ZX^* & Y^*1X^* \\
        0 & X^* & 0 \\
        0 & 0 & X^*
      \end{bmatrix}
    \\
    & =
      \begin{bmatrix}
        0 & 0 & X^* \\
        0 & xX^* & 0 \\
        0 & 0 & xX^*
      \end{bmatrix}.
  \end{align*}
  We conclude by noting that $i(a)  \leq  i(Y^*)$, $i(b)  \leq  i(Y^*ZX^*)$ and $i(xc)  \leq 
  i(xX^*)$, like we did for the proof of \Cref{lem:decomposition-orthogonal}.
\end{proofEnd}

\begin{theoremEnd}{corollary}
  \label{cor:derivative-decomposition-unique}
  The morphism $i : TX  \to  D_x$ is surjective, and thus an isomorphism.  In
  particular, every $e  \in  TX$ can be uniquely written as a sum $a + b + xc$ with
  $a  \leq  Y^*$, $b  \leq  Y^*ZX^*$ and $c  \leq  X^*$.
\end{theoremEnd}

\begin{proofEnd}
  We have
  \begin{align*}
    &
    \begin{bmatrix}
      a & b & c \\
      0 & a+b+xc & 0 \\
      0 & 0 & a+b+xc
    \end{bmatrix}
    \\
    & =
      \begin{bmatrix}
        a & 0 & 0 \\
        0 & a & 0 \\
        0 & 0 & a
      \end{bmatrix}
      +
      \begin{bmatrix}
        0 & b & 0 \\
        0 & b & 0 \\
        0 & 0 & b
      \end{bmatrix}
      +
      \begin{bmatrix}
        0 & 0 & c \\
        0 & xc & 0 \\
        0 & 0 & xc
      \end{bmatrix}
    \\
    & = i(a) + i(b) + i(xc)
    \\
    & = i(a+b+xc).
  \end{align*}
\end{proofEnd}

\begin{definition}
  \label{def:derivative}
  Suppose that
  \begin{align*}
    i(e)
    & =
      \begin{bmatrix}
        a & b & c \\
        0 & d & 0 \\
        0 & 0 & d
      \end{bmatrix}.
  \end{align*}
  We define the following terms:
  \begin{align*}
     \rho _x(e) &  \triangleq  a + b & \text{residue of $e$ with respect to $x$} \\
     \delta _x(e) &  \triangleq  c & \text{derivative of $e$ with respect to $x$}.
  \end{align*}
\end{definition}

\begin{theoremEnd}{theorem}
  \label{thm:derivative-laws}
  The derivative and the residue satisfy the following properties.
  \begin{align*}
    e & =  \rho _x(e) + x \delta _x(e) \\
     \rho _x(e) &  \leq  Y^*+Y^*ZX^* \\
     \rho _x(xe) & = 0 \\
     \rho _x(ye) & = y \rho _x(e) & y  \in  Y \\
     \rho _x(ze) & = ze & z  \in  Z \\
     \rho _x(1) & = 1 \\
     \rho _x(0) & = 0 \\
     \rho _x(e_{1} + e_{2}) & =  \rho _x(e_{1}) +  \rho _x(e_{2}) \\
     \delta _x(x) & = 1 \\
     \delta _x(xe) & = e \\
     \delta _x(y) & = 0 & y  \neq  x \\
     \delta _x(ye) & = y \delta _x(e) & y  \in  Y \\
     \delta _x(ze) & = 0 & z  \in  Z \\
     \delta _x(1) & = 0 \\
     \delta _x(0) & = 0 \\
     \delta _x(e_{1}+e_{2}) & =  \delta _x(e_{1})+ \delta _x(e_{2}) \\
     \delta _x(e_{1}e_{2}) & =  \delta _x(e_{1})e_{2} +  \pi _Y(e_{1}) \delta _x(e_{2}) \\
     \delta _x(e^*) & =  \pi _Y(e)^* \delta _x(e)e^* \\
     \rho _x( \rho _x(e)) & =  \rho _x(e) \\
     \delta _x( \rho _x(e)) & = 0 \\
     \delta _x(e) = 0 &  \iff   \rho _x(e) = e \\
    e_{1}  \leq  e_{2} &  \implies   \delta _{x}(e_{1})  \leq   \delta _{x}(e_{2}) \\
    e_{1}  \leq  e_{2} &  \implies   \rho _{x}(e_{1})  \leq   \rho _{x}(e_{2}).
  \end{align*}
\end{theoremEnd}

\begin{theoremEnd}{corollary}
  \label{cor:derivative-adjunction}
  Given any $e,e'  \in  \mathcal{K}X$, we have
  \[ xe  \leq  e'  \iff  e  \leq   \delta _x(e'). \] In particular, $x \delta _x(e)  \leq  e$.
\end{theoremEnd}

\begin{proofEnd}
  We first show that \( \delta _{x}\) is monotonic.
  Because \(i\) is a homomorphism, hence monotonic.
  Recall the order on matrix model is the point-wise order,
  and because \( \delta _{x}\) is a component of \(i\):
  \[e_{1}  \geq  e_{2}  \implies  i(e_{1})  \geq  i(e_{2})  \implies   \delta _{x}(e_{1})  \geq   \delta _{x}(e_{2}).\]

  The \( \implies \) direction:
  \[xe  \leq  e'  \implies   \delta _{x}(xe)  \leq   \delta _{x}(e')  \implies  e  \leq   \delta _{x}(e').\]
  The \( \impliedby \) direction:
  \[e  \leq   \delta _x(e')  \implies  x e  \leq  x  \delta _x(e')  \leq  e'\]
\end{proofEnd}

TODO: can we prove the minimality of \( \rho _{x}(e)\),
that is \( \rho _{x}(e)\) is the least \(e'\) s.t. \(e = e' + x  \delta _{x}(e)\)

\begin{definition}
  The definition of derivative and residue can be extended to words as follows:
  \begin{align*}
     \delta _ \epsilon (e)  \triangleq  e & \text{ and }  \delta _{w  \cdot  x}(e) =  \delta _w( \delta _{x}(e))\\
     \rho _ \epsilon (e)  \triangleq  0 & \text{ and }  \rho _{w  \cdot  x}(e) =  \rho _w(e) +  \delta _w ( \rho _{x}(e))
  \end{align*}
\end{definition}

\begin{theoremEnd}{theorem}[soundness]
  The following property still holds on derivative and residue for word:
  \begin{align*}
    e & =  \rho _w(e) + w  \cdot   \delta _w(e) \\
    w e  \leq  e' &  \iff  e  \leq   \delta _w(e') \\
  \end{align*}
\end{theoremEnd}

\begin{theoremEnd}{corollary}
  \label{cor:membership-decidability}
  Given any string $s  \in  TX$ and any term $e  \in  TX$, it is decidable to check
  whether $s  \leq  e$.
\end{theoremEnd}

\begin{proofEnd}
  Write $s$ as a product of the form $ \prod  x_i$, where $x_i  \in  X$.  By
  \Cref{cor:derivative-adjunction,cor:unit-membership-decidability}, we can
  check $1  \leq   \delta _{x_n}( \cdots  \delta _{x_1}(e) \cdots )$.
\end{proofEnd}

\begin{theoremEnd}{lemma}
  We have $ \delta _x(e) = 0$ if and only if $e  \leq  Y^*+Y^*ZX^*$.
\end{theoremEnd}

\begin{proofEnd}
  If $e  \leq  Y^*+Y^*ZX^*$, then $ \delta _x(e)  \leq   \delta _x(Y^*+Y^*ZX^*) = 0$.  Conversely,
  suppose that $ \delta _x(e) = 0$.  By \Cref{cor:derivative-decomposition-unique}, we
  can write $e = a + b + x \delta _x(e)$, with $a  \leq  Y^*$ and $b  \leq  Y^*ZX^*$, which
  allows us to conclude.
\end{proofEnd}

We can iterate this procedure finitely many times.

\begin{theoremEnd}{theorem}
  Suppose that $X$ is a commutable set and $A  \subseteq  X$ is a discrete finite
  commutable subset.  Every $e  \in  TX$ can be written in the form
  \[ e = e_0 +  \sum _{x  \in  A}x \delta _x(e),\] where $ \delta _x(e_0) = 0$ for every $x  \in  A$.
  Moreover, the $ \delta _x(e)$ are the unique terms that enable this decomposition.
\end{theoremEnd}

\begin{proofEnd}
  We prove this by induction on the cardinality of $A$.  If $A$ is empty, we can
  take $e_0$ to be $e$. Otherwise, assume that $A = A'  \cup  \{x\}$, where $x  \notin  A'$.
  By the induction hypothesis, we can write $e$ as
  \[ e = e_0 +  \sum _{y  \in  A'} y \delta _y(e), \] where $ \delta _x(e_0) = 0$ for every $x  \in 
  A'$. By \Cref{thm:derivative-laws}, we have
  \begin{align*}
     \delta _x(e)
    & =  \delta _x(e_0) +  \delta _x\left( \sum _{y  \in  A'}y \delta _y(e)\right) \\
    & =  \delta _x(e_0) +  \sum _{y  \in  A'} \delta _x(y \delta _y(e)) \\
    & =  \delta _x(e_0) +  \sum _{y  \in  A'}0 \\
    & =  \delta _x(e_0).
  \end{align*}
  Thus,
  \begin{align*}
    e
    & =  \rho _x(e_0) + x \delta _x(e_0) +  \sum _{y  \in  A'}y \delta _y(e) \\
    & =  \rho _x(e_0) + x \delta _x(e) +  \sum _{y  \in  A'}y \delta _y(e) \\
    & =  \rho _x(e_0) +  \sum _{y  \in  A}y \delta _y(e).
  \end{align*}
  We have $ \delta _x( \rho _x(e_0)) = 0$ and $ \delta _y( \rho _x(e_0))  \leq   \delta _y(e_0) = 0$ if $y  \in  A'$.
  So this decomposition has the desired form.

  To show that the decomposition is unique, suppose that we can write
  \[ e = e' +  \sum _{y  \in  A} y e_y = e' + xe_x +  \sum _{y  \in  A'} ye_y. \] where
  $ \delta _y(e') = 0$ for $y  \in  A'$.  This means that $ \delta _y(e' + xe_x) = 0$ for every
  $y  \in  A'$.  Since the decomposition with respect to $A'$ is unique, we find
  that $e' + xe_x = e_0$ and $e_y =  \delta _y(e)$ for every $y  \in  A'$.  And, since
  $ \delta _x(e') = 0$, the decomposition of $e_0$ with respect to $x$ is unique, and
  $e' =  \rho _x(e_0)$ and $e_x =  \delta _x(e_0)$.
\end{proofEnd}

\subsection{Language Derivative}

\begin{definition}
  \label{def:language-derivative}
  Let $L  \in  \mathcal{L}X$ and $x  \in  X$.  Define $ \delta _x(L)$ as follows:
  \[  \delta _x(L)  \triangleq  \{ s  \in  TX  \mid  xs  \in  L \}. \]
\end{definition}

\begin{theoremEnd}{theorem}
  \label{thm:derivative-commutes}
  The following diagram commutes for every $x  \in  X$:
  \begin{center}
    \begin{tikzcd}
      TX \ar[r, " \delta _x"] \ar[d, "l"] & TX \ar[d, "l"] \\
      \mathcal{L}X \ar[r, " \delta _x"] & \mathcal{L}X.
    \end{tikzcd}
  \end{center}
\end{theoremEnd}

\begin{proofEnd}
  Let $a  \in  TX$ and $s  \in  TX$ be a string.  We have
  \begin{align*}
    & s  \in  l( \delta _x(a)) \\
    &  \Leftrightarrow  s  \leq   \delta _x(a) & \text{by \Cref{thm:membership-characterization}} \\
    &  \Leftrightarrow  xs  \leq  a & \text{by \Cref{cor:derivative-adjunction}} \\
    &  \Leftrightarrow  xs  \in  l(a) & \text{by \Cref{thm:membership-characterization}} \\
    &  \Leftrightarrow  s  \in   \delta _x(l(a)) & \text{by \Cref{cor:derivative-adjunction}.}
  \end{align*}
\end{proofEnd}

\subsection{Fundamental Theorem And Word Decomposition}

Given a commutable set \(X\),
we consider its carrier as a discrete commutable set \(X_ \nsim \).
There is a canonical homomorphism between these commutable sets
\begin{align}
  [-]_ \sim  &: X_ \nsim   \to  X \\
  [x]_ \sim  &  \triangleq  x
\end{align}
And this map lifts to \([-]_ \sim : F X_ \nsim   \to  F X\),
which simply maps each term to syntactically the same term.
Furthermore, this homomorphism can be lifted into matrices by point-wise application.

Since \(X_ \nsim \) is a discrete commutable set,
\(F X_ \nsim \) is simply the free Kleene Algebra over \(X_ \nsim \).
It is well known that the fundamental theorem holds for free Kleene Algebras:
\[  \forall  e  \in  F X_ \nsim , e = E(e) +  \sum _{a  \in  X_ \nsim } a  \cdot   \delta _{a}(e).\]
Thus, the fundamental theorem holds over discrete commutative set.
To extend the fundamental theorem over any commutable set \(X\),
We need a simple lemma:
\begin{theoremEnd}{lemma}
  For any \(x  \in  X\)
  Consider the homomorphism \(i\) from the last section,
  \[[i(e)]_ \sim   \leq  i([e]_ \sim ).\]
\end{theoremEnd}

\begin{proofEnd}
  The proof is a simple inductive the structure of \(e\),
  the base case is apparent by inspecting the definition of \(i\).
  And the induction case is trivial by the fact that both \(i\) and \([-]_ \sim \)
  are homomorphism, and all KA operations preserves order.

  We will take the star case as an example,
  assume \(e  \triangleq  e'^*\), given \([i(e')]_ \sim   \leq  i([e']_ \sim )\),
  by the fact that star operation preserves order:
  \[[i(e'^*)]_ \sim  = ([i(e')]_ \sim )^*  \leq  (i([e']_ \sim ))^* = i([e'^*]_ \sim ).\]

  TODO: we can probably prove this as a general lemma,
  order of the homomorphism are uniquely determined by order on primitives.
\end{proofEnd}

TODO: prove \([-]_ \sim \) preserves \(E\).

Since \([i(e)]_ \sim   \leq  i([e]_ \sim )\), and the derivative is a component of \(i\),
we have \[ \forall  x  \in  X, [ \delta _{x}(e)]_ \sim   \leq   \delta _{x}([e]_ \sim ).\]

\begin{theoremEnd}{theorem}[Fundamental Theorem]
  For all term \(e_ \sim   \in  F X\), we have the following equation:
  \[e = E(e) +  \sum _{a  \in  X} a  \cdot   \delta _{a}(e).\]
\end{theoremEnd}

\begin{proofEnd}
  Since \([-]_ \sim \) is surjective,
  we let \(e = [e_ \nsim ]_ \sim \) for some \(e_ \nsim   \in  F X_ \nsim \).

  We first show that \(e  \leq  E(e) +  \sum _{a  \in  X} a  \cdot   \delta _{a}(e)\).
  Given that the fundamental theorem hold for \(e_ \nsim \),
  we have the following inequality:
  \[e_ \nsim   \leq  E(e_ \nsim ) +  \sum _{a  \in  X} a  \cdot   \delta _{a}(e_ \nsim )\]
  We apply the homomorphism \([-]_ \sim \) to both side, and get:
  \[e  \leq  [E(e_ \nsim )]_ \sim  +  \sum _{a  \in  X} a  \cdot  [ \delta _{a}(e_ \nsim )]_ \sim .\]
  Because \([E(e_ \nsim )]_ \sim  = E([e_ \nsim ]_ \sim )\) and \([ \delta _{x}(e_ \nsim )]_ \sim   \leq   \delta _{x}([e_ \nsim ]_ \sim )\),
  we have
  \[e  \leq  E(e) +  \sum _{a  \in  X} a  \cdot   \delta _{a}(e).\]

  The other direction \(e  \geq  E(e) +  \sum _{a  \in  X} a  \cdot   \delta _{a}(e)\)
  is straightforward:

  Since \(e =  \rho _{x}(e) + a  \delta _{a}(e)\),
  therefore \(e  \geq  a  \delta _{a}(e)\) for all \(a\).
  And because \(e  \geq  E(e)\),
  we have \[e  \geq  E(e) +  \sum _{a  \in  X} a  \cdot   \delta _{a}(e).\]
\end{proofEnd}

\begin{theoremEnd}{corollary}[Word Decomposition]
  Given an expression \(e  \in  F X\), and a word \(w  \in  l(e)\),
  the following equality holds:
  \[e = w +  \rho _w(e) +
         \sum _{x  \in  X} w  \cdot  x  \cdot   \delta _{w  \cdot  x}(e).\]
\end{theoremEnd}

\begin{proofEnd}
  \begin{align*}
    e
    & = w  \cdot   \delta _w(e) +  \rho _w(e) \\
    & = w  \cdot  (1 +  \sum _{x  \in  X} x  \cdot   \delta _{w \cdot x}(e)) +  \rho _w(e) \\
    & = w +  \rho _w(e) +  \sum _{x  \in  X} w  \cdot  x  \cdot   \delta _{w  \cdot  x}(e)
  \end{align*}
\end{proofEnd}

The word decomposition theorem can be seen as a more explicit proof of
the completeness of word inhabitant.
We can derive a similar but less explicit result without using the fundamental theorem,
by applying the decomposition in~\Cref{cor:unit-membership-decidability}.
Although such result is more opaque, it is enough for proving the decidability and undecidability results.

\fi 

\iffull \else
\appendix
\section{Detailed Proofs}
\printProofs
\fi

\end{document}